\author{Jun-Ming Zhu$^{a,b}$\footnote{E-mail address:
 junming\_zhu@163.com }
\\
$^{a}$Department of Mathematics, Luoyang Normal
University,\\
Luoyang City, Henan Province 471022, China
\\
$^{b}$Department of Mathematics, East China Normal University,\\
Shanghai 200241, China }
\title{An alternate circular  summation formula of  \\theta functions
and its applications \footnote{
This research is supported by the
 Natural Science Foundation of China
 (Grant No. 11171107 ) 
and the Foundation of Fundamental and Advanced Research of Henan Province (Grant No. 112300410024 ).}
 }
\date{}
\newtheorem{lem}{\quad\textbf{\Large Lemma}}[section]
\newtheorem{thm}[lem]{\quad\textbf{\Large Theorem}}
\newtheorem{cor}[lem]{\quad\textbf{\Large Corollary}}
\newtheorem{prop}[lem]{\quad\textbf{\Large Propsotion}}
\begin{document}
 \maketitle
 \setcounter{section}{0}
\begin{abstract}
We prove a general alternate circular summation formula of theta
functions, which implies a great deal of theta-function identities.
In particular, we recover several identities in Ramanujan's Notebook
from this identity. We also obtain two formulaes for
$(q;q)_\infty^{2n}$.

\textbf{Key words: } theta function, circular summation, elliptic
function, Ramanujan, modular equation

\textbf{MSC 2010:} 11F27, 11F20, 11F11, 33E05\end{abstract}

\section{Introduction }
Throughout  we put $q=e^{2\pi i\tau}$, where $\mbox{Im}\ \tau>0$. As
usual, the Jacobi theta functions $\theta_k(z|\tau)$ for $k=1,
2,3,4$ are defined as follows:
\begin{eqnarray*}     
\theta_1(z|\tau) &=&-iq^{1\over8}\sum\limits_{n=-\infty}^{\infty}
                           (-1)^nq^{n(n+1)\over2}e^{(2n+1)iz}
                  ,\\
\theta_2(z|\tau)&=&q^{1\over8}\sum\limits_{n=-\infty}^{\infty}
                      q^{n(n+1)\over2}e^{(2n+1)iz}
                 ,\\
  \theta_3(z|\tau)&=&\sum\limits_{n=-\infty}^{\infty}
                      q^{n^2\over2}e^{2niz}
                      , \\
  \theta_4(z|\tau)&=&\sum\limits_{n=-\infty}^{\infty}
                      (-1)^nq^{n^2\over2}e^{2niz}.
\end{eqnarray*}
To carry out our work, we need some notations and basic facts about
the Jacobi theta functions. We use the familiar notation
$$
 (z;q)_\infty=\prod_{n=0}^\infty (1-zq^n)
$$
and sometimes write
$$
(a,b,\cdots,c;q)_\infty = (a;q)_\infty
(b;q)_\infty\cdots(c;q)_\infty.
$$
Using the well-known Jacobi product identity \cite[p. 35, Entry
19]{berndt}
\begin{equation} \label{triple}
f(a,b):=\sum_{n=-\infty}^{\infty}a^{n(n+1)\over2}b^{n(n-1)\over2}
=(ab,-a,-b;ab)_\infty,
\end{equation}
 we can deduce
the infinite product representations for theta functions, namely,
\begin{eqnarray}       \label{thetap}
\begin{array}{ll}
\theta_1(z|\tau)=iq^{1\over8}e^{-iz}(q,e^{2iz},qe^{-2iz};q)_\infty~,
&\theta_2(z|\tau)=q^{1\over8}e^{-iz}(q,-e^{2iz},-qe^{-2iz};q)_\infty~,\\
\theta_3(z|\tau)=(q,-q^{1\over2}e^{2iz},-q^{1\over2}e^{-2iz};q)_\infty~,
&\theta_4(z|\tau)=(q,q^{1\over2}e^{2iz},q^{1\over2}e^{-2iz};q)_\infty~. \\
\end{array}
\end{eqnarray}
Employing the above identities, we can easily get the following
relations:
\begin{eqnarray}  \label{pi}
\begin{array}{lcl}
\theta_1(z+\pi|\tau)=-\theta_1(z|\tau),&&
\theta_2(z+\pi|\tau)=-\theta_2(z|\tau),\\
\theta_3(z+\pi|\tau)=\theta_3(z|\tau),&&
\theta_4(z+\pi|\tau)=\theta_4(z|\tau)
\end{array}
\end{eqnarray}
and
 \begin{eqnarray}       \label{pitau}
\begin{array}{lcl}
\theta_1(z+\pi\tau|\tau)=-q^{-\frac{1}{2}}e^{-2iz}\theta_1(z|\tau),&&
 \theta_2(z+\pi\tau|\tau)=q^{-\frac{1}{2}}e^{-2iz}\theta_2(z|\tau),\\
   \theta_3(z+\pi\tau|\tau)=q^{-\frac{1}{2}}e^{-2iz}\theta_3(z|\tau),&&
   \theta_4(z+\pi\tau|\tau)=-q^{-\frac{1}{2}}e^{-2iz}\theta_4(z|\tau).
\end{array}
\end{eqnarray}
We also have
\begin{eqnarray}       \label{pi/2}
\begin{array}{lcl}
\theta_1(z+{\pi\over2}|\tau)=\theta_2(z|\tau),&&
 \theta_2(z+{\pi\over2}|\tau)=-\theta_1(z|\tau),\\
\theta_3(z+{\pi\over2}|\tau)=\theta_4(z|\tau),&&
 \theta_4(z+{\pi\over2}|\tau)=\theta_3(z|\tau) \\
\end{array}
\end{eqnarray}
and
\begin{eqnarray}       \label{pitau/2}
\begin{array}{lcl}\theta_1(z+{\pi\tau\over2}|\tau)=iq^{-{1\over8}}e^{-iz}
\theta_4(z|\tau),&&
 \theta_2(z+{\pi\tau\over2}|\tau)=q^{-{1\over8}}e^{-iz}\theta_3(z|\tau),\\
\theta_3(z+{\pi\tau\over2}|\tau)=q^{-{1\over8}}e^{-iz}\theta_2(z|\tau),&&
 \theta_4(z+{\pi\tau\over2}|\tau)=iq^{-{1\over8}}e^{-iz}\theta_1(z|\tau).
\end{array}
\end{eqnarray}

We also need the following special case of $f(a,b)$:
\begin{eqnarray} \label{sf}
\varphi(q):=f(q,q),~~\psi(q):=f(q,q^3).
\end{eqnarray}
Definitions (\ref{sf})
 can be found in \cite[p.
36--37, Entry 22]{berndt}. The following properties of $\varphi(q)$ and $\psi(q)$ can be verified through simple computations.
\begin{eqnarray}
\varphi(q)&=&\theta_3(0|2\tau)=(q^2,-q,-q;q^2)_\infty,\\
\varphi(-q)&=&\theta_4(0|2\tau)=(q^2,q,q;q^2)_\infty
=(q;q)_\infty(q;q^2)_\infty =\frac{(q;q)_\infty}{(-q;q)_\infty},\\
\psi(q)&=&\theta_2(\pi\tau|4\tau)=\theta_3(\pi\tau|4\tau)
={1\over2}~q^{-{1\over8}}\theta_2(0|\tau)
=(q,-q,-q;q)_\infty=\frac{(q^2;q^2)_\infty}{(q;q^2)_\infty},~~
\\
\psi(-q)&=&-i\theta_1(\pi\tau|4\tau)
=\theta_4(\pi\tau|4\tau)=(q;q)_\infty(-q^2;q^2)_\infty
=\frac{(q^2;q^2)_\infty}{(-q;q^2)_\infty}. \label{ssf}
\end{eqnarray}

On page 54 of his Lost Notebook \cite{ram}, Ramanujan recorded the
following statement (translated here in terms of
$\theta_3(z|\tau)$).
\begin{thm}\label{ram}
For any positive integer $n\geq 2$,
\begin{equation}
\sum_{k=0}^{n-1}q^{k^2}e^{2k iz}\theta^n_3(z+k\pi\tau|n\tau)=
\theta_3(z|\tau)F_n(\tau).\label{1eq5}
\end{equation}
When $n\geq 3$,
\begin{equation}
F_n(\tau)=1+2nq^{n-1}+\cdots. \label{1eq6}
\end{equation}
\end{thm}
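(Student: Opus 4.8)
The plan is to regard the left-hand side, call it $G_n(z)$, as an entire function of $z$ and to show that it obeys the two functional equations that pin down $\theta_3(z|\tau)$ up to a factor free of $z$. Invariance under $z\mapsto z+\pi$ is immediate: each $e^{2ki\pi}=1$, and $\theta_3(z+\pi|n\tau)=\theta_3(z|n\tau)$ by the $\pi$-periodicity in \eqref{pi}, so every summand is unchanged and $G_n(z+\pi)=G_n(z)$.

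The heart of the argument is the transformation under $z\mapsto z+\pi\tau$. Here I would substitute $z+\pi\tau$ into each summand, use $e^{2ki\pi\tau}=q^{k}$ together with $\theta_3\big(z+\pi\tau+k\pi\tau\,\big|\,n\tau\big)=\theta_3\big(z+(k+1)\pi\tau\,\big|\,n\tau\big)$, and then reindex $k\mapsto k+1$. The relabelled index reaches $k=n$, whose summand is folded back onto the $k=0$ summand through the full-period relation $\theta_3(z+n\pi\tau|n\tau)=q^{-n/2}e^{-2iz}\theta_3(z|n\tau)$, which is \eqref{pitau} with $\tau$ replaced by $n\tau$. The prefactors $q^{k^2}e^{2kiz}$ are calibrated exactly so that this cyclic wrap-around is seamless and the whole sum reproduces itself against a single multiplier; the identity I expect to fall out is $G_n(z+\pi\tau)=q^{-1/2}e^{-2iz}G_n(z)$, the very multiplier carried by $\theta_3(z|\tau)$ in \eqref{pitau}. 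Making this telescoping exact — in particular reconciling the powers of $q$ on the two sides of the wrap-around — is the step I anticipate as the main obstacle.

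With both functional equations in hand, set $R(z)=G_n(z)/\theta_3(z|\tau)$. The two equations make $R$ invariant under $z\mapsto z+\pi$ and under $z\mapsto z+\pi\tau$, so $R$ is elliptic with periods $\pi$ and $\pi\tau$. Since $G_n$ is a finite sum of products of theta functions it is entire, so the only pole $R$ could have in a fundamental parallelogram is a simple pole at the unique simple zero $z_0=\tfrac{\pi}{2}+\tfrac{\pi\tau}{2}$ of $\theta_3(z|\tau)$. An elliptic function cannot have just one simple pole, because its residues must sum to zero; hence $R$ has no poles, is a bounded holomorphic function on the torus, and is therefore a constant. Denoting that constant by $F_n(\tau)$ yields \eqref{1eq5}.

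For \eqref{1eq6} I would read off $F_n(\tau)$ as the $z$-independent term in the Fourier expansion of $G_n(z)$ in powers of $e^{2iz}$, since $\theta_3(z|\tau)$ contributes exactly $1$ to that term. Expanding each factor $\theta_3^n(z+k\pi\tau|n\tau)$ as an $n$-fold lattice sum and collecting the $z$-free part forces $m_1+\cdots+m_n=-k$, after which the exponent of $q$ becomes a nonnegative quadratic expression in $(k,m_1,\dots,m_n)$ that vanishes only when $k=0$ and every $m_i=0$; this produces the leading $1$. A brief inspection shows the next-smallest exponent is attained precisely by $k=1$ and by $k=n-1$, each through a single coordinate $m_i=\mp1$ and each supplying $n$ equal contributions, which gives $F_n(\tau)=1+2nq^{n-1}+\cdots$ when $n\ge 3$; for $n=2$ a lower-order term intervenes, which is why \eqref{1eq6} is asserted only for $n\ge3$. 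What remains is the routine enumeration of these lattice points and a check that no smaller exponent occurs.
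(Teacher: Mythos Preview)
The paper does not prove Theorem~\ref{ram}; it is quoted as historical background, with proofs attributed to Rangachari, Son, and Chan--Liu--Ng. So there is no in-paper argument to compare against. Your elliptic-function strategy is, however, exactly the method the paper uses for its own main result, Theorem~\ref{fund}: check the two quasi-periods, divide by the appropriate theta function, observe that the quotient is elliptic with at most one simple pole, and conclude it is constant.

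Your instinct that the $z\mapsto z+\pi\tau$ step is the crux is right, and in fact with the prefactor $q^{k^2}$ as printed that step \emph{fails}. After the shift and the reindexing $k\mapsto k-1$ the $k$-th summand carries
\[
q^{(k-1)^2}e^{2(k-1)i(z+\pi\tau)}=q^{k^2-k}e^{2(k-1)iz},
\]
whereas matching against $q^{-1/2}e^{-2iz}\cdot q^{k^2}e^{2kiz}=q^{k^2-1/2}e^{2(k-1)iz}$ would force $k=\tfrac12$. Indeed for $n=2$ the left side of \eqref{1eq5} equals $\theta_3^2(z|2\tau)+q^{1/2}\theta_2^2(z|2\tau)$, which is \emph{not} a scalar multiple of $\theta_3(z|\tau)$ (the Landen identity gives $\theta_3^2(z|2\tau)+\theta_2^2(z|2\tau)=\theta_3(0|\tau)\theta_3(z|\tau)$). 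The trouble is a convention slip in the statement: with the paper's $q=e^{2\pi i\tau}$ the prefactor should be $q^{k^2/2}e^{2kiz}$ (equivalently, \eqref{1eq5}--\eqref{1eq6} are written with the nome $q=e^{\pi i\tau}$). With that correction your telescoping closes up exactly, the wrap-around term matches via $\theta_3^n(z+n\pi\tau\,|\,n\tau)=q^{-n^2/2}e^{-2niz}\theta_3^n(z\,|\,n\tau)$, and the rest of your outline goes through. The same convention shift reconciles your Fourier computation for $F_n$: one finds $F_n=1+2n\,q^{(n-1)/2}+\cdots$ in the paper's $q$, which is $1+2n\,q^{\,n-1}+\cdots$ in the nome, the minimum of $k(n-k)/2$ over $1\le k\le n-1$ being attained at $k=1$ and $k=n-1$ for $n\ge 3$.
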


The proof of (\ref{1eq5}) was first given by Rangachari \cite{ran},
and then, by Son \cite{son}.  Then, several authors devoted
 papers to the evaluations of $F_n(\tau)$ for the integers $n$  not
found in Ramanujan's work ( see \cite{ah,   chua, chua2, ono}).
Later, H. H. Chan, Z.-G. Liu and S. T. Ng \cite{chanliung} gave the
first proof of the entire Theorem \ref{ram}, i.e., (\ref{1eq5}) and
(\ref{1eq6}). By applying the Jacobi imaginary transformation to
(\ref{1eq5}), Chan, Liu and Ng \cite{chanliung} got
\begin{thm} \label{t2}For any positive integer $n$, there exists a
quantity $G_n(\tau)$ such that
\begin{equation*}
\sum_{k=0}^{n-1}\theta^n_3\left(z+\frac{k\pi}{n}\Big|
\tau\right)=G_n(\tau)\theta_3(nz|n\tau). \label{2eq1}
\end{equation*}
\end{thm}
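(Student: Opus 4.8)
The plan is to prove Theorem \ref{t2} by showing that the left-hand side, viewed as a function of $z$, is an elliptic function (for the appropriate lattice) having the same poles/zeros structure as $\theta_3(nz|n\tau)$, so that their ratio $G_n(\tau)$ is independent of $z$. Since the theta functions are entire, the argument is really about periodicity: I would set
\begin{equation*}
S(z):=\sum_{k=0}^{n-1}\theta_3^n\!\left(z+\tfrac{k\pi}{n}\Big|\tau\right)
\end{equation*}
and demonstrate that $S(z)/\theta_3(nz|n\tau)$ is doubly periodic and holomorphic, hence constant in $z$ by Liouville's theorem applied to elliptic functions.

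**First I would** establish the two quasi-periodicity relations for $S(z)$. Under $z\mapsto z+\pi/n$, the summand merely shifts the index $k$ cyclically (using $\theta_3(w+\pi|\tau)=\theta_3(w|\tau)$ from (\ref{pi}) to fold $k=n-1$ back to $k=0$), so $S(z+\pi/n)=S(z)$. Under $z\mapsto z+\pi\tau$, I would apply the quasi-period formula $\theta_3(w+\pi\tau|\tau)=q^{-1/2}e^{-2iw}\theta_3(w|\tau)$ from (\ref{pitau}) to each factor; each of the $n$ factors contributes $q^{-1/2}e^{-2i(z+k\pi/n)}$, and the product over the $n$ factors yields a common automorphy factor $q^{-n/2}e^{-2ni z}\prod_k e^{-2\pi i k/n}$ that is the \emph{same} for every $k$ (the $k$-dependent piece $\prod_k e^{-2\pi ik/n}$ is a constant), so $S(z+\pi\tau)=C\,q^{-n/2}e^{-2niz}S(z)$ for some constant $C$. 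I would then check against the denominator: writing $w=nz$, the quasi-periods of $\theta_3(nz|n\tau)$ are $z\mapsto z+\pi/n$ (giving $\theta_3(w+\pi|n\tau)=\theta_3(w|n\tau)$) and $z\mapsto z+\pi\tau$ (giving $\theta_3(w+n\pi\tau|n\tau)$, whose automorphy factor from iterating (\ref{pitau}) works out to the matching $q^{-n/2}e^{-2niz}$ up to the same constant). Matching these two automorphy factors shows the ratio is genuinely doubly periodic.

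**The hard part will be** verifying that the ratio is holomorphic, i.e.\ that the zeros of the denominator $\theta_3(nz|n\tau)$ are cancelled by zeros of the numerator $S(z)$, so that no poles survive. The zeros of $\theta_3(nz|n\tau)$ occur at the points where $nz\equiv \tfrac{\pi}{2}+\tfrac{n\pi\tau}{2}\pmod{\pi,\ n\pi\tau}$; I would need to show that $S(z)$ vanishes at each such point. This is the place where the specific translation-by-$k\pi/n$ structure of the sum does the real work: at a zero $z_0$ of the denominator, the $n$ terms of $S(z_0)$ should pair up or cancel via the shift symmetry, and I expect to verify this by again invoking (\ref{pi}) and (\ref{pitau}) to relate $\theta_3(z_0+k\pi/n|\tau)$ across different $k$. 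Once holomorphy and double periodicity are both in hand, Liouville forces $S(z)/\theta_3(nz|n\tau)$ to be a constant depending only on $\tau$, which we name $G_n(\tau)$, completing the proof.

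**An alternative I would keep in reserve** is to bypass the pole-cancellation analysis entirely by obtaining Theorem \ref{t2} as a direct corollary of Theorem \ref{ram}, exactly as the excerpt indicates (``By applying the Jacobi imaginary transformation to (\ref{1eq5})''). Concretely, I would apply the modular transformation $\tau\mapsto -1/\tau$ to the established identity (\ref{1eq5}): the imaginary transformation law for $\theta_3$ converts the sum $\sum_k q^{k^2}e^{2kiz}\theta_3^n(z+k\pi\tau|n\tau)$ into a sum of the form $\sum_k \theta_3^n(z+k\pi/n\,|\,\tau)$ after suitable rescaling of $z$ and absorption of exponential prefactors, while the right-hand side $\theta_3(z|\tau)F_n(\tau)$ transforms into $\theta_3(nz|n\tau)$ times a new $\tau$-dependent factor, which becomes $G_n(\tau)$. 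This route trades the elliptic-function bookkeeping for careful tracking of the imaginary-transformation Gaussian prefactors, and I expect the main obstacle there to be organizing those prefactors so that the $k$-dependent exponentials recombine cleanly into the shifted-argument form on the left.
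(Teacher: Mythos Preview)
The paper does not supply its own proof of Theorem~\ref{t2}; it simply attributes the result to Chan, Liu and Ng, who obtained it by applying the Jacobi imaginary transformation to Ramanujan's identity~(\ref{1eq5}). Your ``alternative I would keep in reserve'' is therefore exactly the route the paper cites, and that part of your proposal is fine.

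Your primary approach---forming the ratio $S(z)/\theta_3(nz|n\tau)$ and arguing it is constant---is in the same spirit as the paper's proof of the more general Theorem~\ref{fund}. The periodicity checks are essentially correct (with one slip: for fixed $k$ the summand is $\theta_3^{\,n}(z+k\pi/n\,|\,\tau)$, so the automorphy factor under $z\mapsto z+\pi\tau$ is $\bigl(q^{-1/2}e^{-2i(z+k\pi/n)}\bigr)^n=q^{-n/2}e^{-2niz}e^{-2\pi ik}=q^{-n/2}e^{-2niz}$; there is no product over $k$ involved, and the constant $C$ you introduce is simply $1$).

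Where your plan goes astray is the step you flag as ``the hard part''. You propose to show holomorphy of the ratio by proving directly that $S(z_0)=0$ at every zero $z_0$ of $\theta_3(nz|n\tau)$, via some cancellation among the $n$ terms. This is both unnecessary and not clearly feasible: the values $\theta_3(z_0+k\pi/n\,|\,\tau)$ for different $k$ are not related to one another by the relations (\ref{pi})--(\ref{pitau}) in any way that forces a cancellation, and in effect you would be trying to prove the identity pointwise. The correct argument---and the one the paper uses verbatim in its proof of Theorem~\ref{fund}---bypasses this entirely: on the period parallelogram with periods $\pi/n$ and $\pi\tau$, the denominator $\theta_3(nz|n\tau)$ has exactly \emph{one} simple zero, so the ratio is an elliptic function with at most one simple pole. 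An elliptic function with at most one pole in its fundamental domain is constant (cf.\ the remark following the proof in Section~\ref{proof}). That single sentence replaces your entire ``hard part''.
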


In their paper \cite{bzz}, M. Boon \textit{et al.} proved the
following additive decomposition of $\theta_3(z|\tau)$ (see
\cite[Eq. (7)]{bzz}).
\begin{thm}\label{boon}
For any positive integer $n$, we have
\begin{equation*}
\sum_{k=0}^{n-1}\theta_3(z+k\pi|\tau)= n\theta_3(nz|n^2\tau).
\label{1eqb}
\end{equation*}
\end{thm}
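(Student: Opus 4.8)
The plan is to establish the identity by the classical multisection (roots-of-unity) device, reading the right-hand side as $n$ times the part of the $q$-expansion of $\theta_3(z|\tau)$ carried by those frequencies that are divisible by $n$. Recall from the introduction that $\theta_3(z|\tau)=\sum_{m=-\infty}^{\infty}q^{m^2/2}e^{2miz}$ has least period $\pi$ in $z$, so the $n$ translates summed on the left run over the $n$ equally spaced division points of that period. First I would insert this series into each summand and interchange the sum over $m$ with the sum over $k$; since $\mathrm{Im}\,\tau>0$ forces $|q|<1$, the double series converges absolutely and the reversal needs no further comment.

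After reversing the order of summation the left-hand side takes the form $\sum_{m=-\infty}^{\infty}q^{m^2/2}e^{2miz}\,S_m$, where $S_m=\sum_{k=0}^{n-1}\omega^{mk}$ is a complete geometric sum in the primitive $n$-th root of unity $\omega=e^{2\pi i/n}$ supplied by the translation. The central computation is the evaluation $S_m=n$ when $n\mid m$ and $S_m=0$ otherwise; this is where the whole identity lives, since it annihilates every frequency not divisible by $n$ and rescales the survivors by $n$. Thus the left-hand side collapses to $n\sum_{\ell=-\infty}^{\infty}q^{n^2\ell^2/2}e^{2n\ell iz}$, the sum now taken over $m=n\ell$.

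It then remains to recognise the surviving series as a single theta function. Writing $q^{n^2\ell^2/2}=\bigl(e^{2\pi i(n^2\tau)}\bigr)^{\ell^2/2}$ exhibits $q^{n^2}$ as exactly the nome attached to the modular parameter $n^2\tau$, while $e^{2n\ell iz}=e^{2\ell i(nz)}$; comparing with the defining series identifies the sum as $\theta_3(nz|n^2\tau)$, so the left-hand side equals $n\,\theta_3(nz|n^2\tau)$, as claimed.

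I expect the only genuine care to be needed in the filtering step: one must check that the phases introduced by the $n$ translates are precisely the full set of $n$-th roots of unity, so that $S_m$ really is the indicator of $n\mid m$ scaled by $n$, and then carry out the re-indexing $m=n\ell$ cleanly. A second, more structural route, closer to the elliptic-function arguments used later in the paper, would form the quotient of the two sides and apply Liouville's theorem after verifying that numerator and denominator are entire in $z$ and share the same period together with the same multiplier along the $\tau$-direction of the lattice; I would nonetheless prefer the series computation, since the quasi-periodicity of the translated sum on the left is fiddlier to pin down than the one-line geometric sum.
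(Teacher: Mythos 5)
Your multisection argument is correct and complete for the intended identity: inserting $\theta_3(z|\tau)=\sum_{m=-\infty}^{\infty}q^{m^2/2}e^{2miz}$, the absolute convergence coming from $|q|<1$ justifies the interchange, the geometric sum $\sum_{k=0}^{n-1}e^{2\pi imk/n}$ equals $n$ or $0$ according as $n\mid m$ or not, and the re-indexing $m=n\ell$ yields exactly $\theta_3(nz|n^2\tau)$, since $q^{n^2}=e^{2\pi i(n^2\tau)}$ is the nome attached to $n^2\tau$. One point deserves an explicit remark rather than silent correction: your computation is for the shifts $z+k\pi/n$ (you describe them as the equally spaced division points of the period, and your phase $\omega^{mk}$ with $\omega=e^{2\pi i/n}$ presupposes this), whereas the statement as printed has $z+k\pi$. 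As printed it is false for $n\geq2$: by the periodicity in (\ref{pi}) the left-hand side would be $n\theta_3(z|\tau)$, which does not equal $n\theta_3(nz|n^2\tau)$. The printed statement is evidently a misprint for $k\pi/n$, consistent with Eq.\ (7) of \cite{bzz} and with the normalizations $k\pi/(mn)$ appearing in Theorems \ref{zeng} and \ref{chanliu}; your proof establishes that corrected version.

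As for comparison with the paper: there is no internal proof to match, since Theorem \ref{boon} is quoted from \cite{bzz} as background. Your series computation is the standard direct proof. Your sketched alternative --- quotient of the two sides plus Liouville --- is precisely the method the paper deploys for its own Theorem \ref{fund}: one would set $g(z)=\sum_{k=0}^{n-1}\theta_3(z/n+k\pi/n\,|\,\tau/n^2)$, verify $g(z+\pi)=g(z)$ and $g(z+\pi\tau)=q^{-1/2}e^{-2iz}g(z)$, conclude that $g(z)/\theta_3(z|\tau)$ is an elliptic function with at most one pole in a period parallelogram, hence constant, and evaluate the constant as $n$ by comparing a single Fourier coefficient. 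Your instinct that the quasi-periodicity is fiddlier there is right (the rescaling of $z$ and $\tau$ is exactly what makes the multipliers match), and the roots-of-unity filtering does in one line what the coefficient-comparison step does in that argument; the elliptic route buys generality (it survives the insertion of the parameters $y_j$ and the alternating signs, where plain multisection of a product of theta series no longer collapses so simply), which is why the paper uses it for Theorem \ref{fund}.
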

Inspired by \cite{chanliung}  and \cite{bzz}, X.-F. Zeng \cite{zeng}
proved the following important formula unifying Theorem \ref{t2} and
\ref{boon}\ .
\begin{thm}\label{zeng}
For any positive integer $m,n,a$ and $b$ with $a+b=n$, there exists
a quantity $G_{a,b,m,n}(y|\tau)$ such that
\begin{equation*}
\sum_{k=0}^{mn-1}\theta_3^a(z+{y\over a}+{k\pi\over
mn}|\tau)\theta_3^b(z-{y\over a}+{k\pi\over mn}|\tau)=
G_{a,b,m,n}(y|\tau)\theta_3(mnz|m^2n\tau).
\end{equation*}
\end{thm}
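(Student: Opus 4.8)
The plan is to fix the parameters $y$ and $\tau$ once and for all and to read both sides as holomorphic functions of the single variable $z$. Write $L(z)$ for the left-hand sum and $R(z)=\theta_3(mnz\,|\,m^2n\tau)$ for the right-hand factor. Since $L$ is a finite sum of products of the entire functions $\theta_3(\,\cdot\,|\tau)$ and $R$ is a single theta function, both are entire in $z$; hence the whole assertion reduces to showing that $L$ and $R$ have identical quasi-periodicity, for then they span the same one-dimensional space of theta functions and must be proportional, the proportionality factor being the desired $G_{a,b,m,n}(y\,|\,\tau)$, which depends on $y$ and $\tau$ but not on $z$.

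First I would record the two quasi-periods of $R$. Putting $W=mnz$ and $T=m^2n\tau$, so that $e^{2\pi iT}=q^{m^2n}$, the relations (\ref{pi}) and (\ref{pitau}) applied with modulus $T$ give $R\bigl(z+\tfrac{\pi}{mn}\bigr)=R(z)$ and $R(z+\pi m\tau)=q^{-m^2n/2}e^{-2imnz}R(z)$, because the $z$-shift $\tfrac{\pi}{mn}$ corresponds to the period $\pi$ in $W$ and the $z$-shift $\pi m\tau$ to the full quasi-period $\pi T$ in $W$.

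Next I would check that $L$ obeys exactly these two relations. The period $\tfrac{\pi}{mn}$ is immediate: replacing $z$ by $z+\tfrac{\pi}{mn}$ carries the $k$-th summand to the $(k+1)$-th, and the wrap-around term $k=mn-1\mapsto mn$ returns to the $k=0$ term because $\theta_3(\,\cdot+\pi\,|\tau)=\theta_3(\,\cdot\,|\tau)$ by (\ref{pi}), applied to both the $a$-fold and the $b$-fold factor. For the quasi-period I would iterate (\ref{pitau}) $m$ times to obtain $\theta_3(u+\pi m\tau\,|\tau)=q^{-m^2/2}e^{-2imu}\theta_3(u\,|\tau)$ and apply it to each of the $a+b=n$ theta factors of every summand, with $u=z\pm\tfrac{y}{a}+\tfrac{k\pi}{mn}$. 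Using $a+b=n$, the $q$-powers multiply to $q^{-m^2n/2}$, the $z$-exponentials to $e^{-2imnz}$, and the $k$-exponentials to $e^{-2i(a+b)k\pi/n}=e^{-2ik\pi}=1$, so that no $k$-dependence survives; what remains is a single $z$-free and $k$-free factor coming from the shifts $\pm\tfrac{y}{a}$. For $L$ to carry \emph{exactly} the multiplier $q^{-m^2n/2}e^{-2imnz}$ of $R$, this residual factor must reduce to something $z$-independent, which is precisely a balancing condition on how $y$ is apportioned between the two groups of factors; verifying this balance, together with the phase cancellations above, is the heart of the computation.

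Once $L$ is seen to share both quasi-periods of $R$, I would finish with the classical fact that the space of entire functions obeying these two quasi-period relations is one-dimensional: the fundamental $z$-parallelogram spanned by $\tfrac{\pi}{mn}$ and $\pi m\tau$ maps under $W=mnz$ to a single period cell of $\theta_3(\,\cdot\,|\,m^2n\tau)$, in which $\theta_3$ has exactly one zero, so the number of zeros — and hence the dimension — is $1$. Thus $L$ must be a $z$-independent multiple of $R$, which gives $L(z)=G_{a,b,m,n}(y\,|\,\tau)R(z)$, as claimed. I expect the $z\mapsto z+\pi m\tau$ step to be the main obstacle: one must verify that the accumulated exponential phases collapse exactly to the multiplier of $R$, and this collapse is what forces both the constraint $a+b=n$ and the correct placement of the shifts $\pm\tfrac{y}{a}$; the concluding zero-count is then routine.
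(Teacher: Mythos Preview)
The paper does not actually prove this statement: Theorem~\ref{zeng} is quoted from Zeng~\cite{zeng} as background, and the paper's own contribution is the alternating analogue, Theorem~\ref{fund}. That said, your strategy---check that the left side shares the two quasi-periods of $\theta_3(mnz\,|\,m^2n\tau)$ and then invoke one-dimensionality---is exactly the method the paper uses to prove Theorem~\ref{fund} (there it is phrased as ``$g/\theta_2$ is elliptic with at most one pole, hence constant''), so in spirit your proposal matches the paper's technique.

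There is, however, a genuine gap at precisely the spot you flag as ``the heart of the computation''. When you shift $z\mapsto z+m\pi\tau$ and collect phases, the residual $y$-contribution is
\[
\exp\Bigl(-2im\bigl(a\cdot\tfrac{y}{a}+b\cdot(-\tfrac{y}{a})\bigr)\Bigr)=\exp\Bigl(-\tfrac{2im(a-b)y}{a}\Bigr),
\]
which is \emph{not} $1$ in general. Your sentence ``this residual factor must reduce to something $z$-independent'' is too weak: the factor is automatically $z$-independent; what you need is that it equal $1$, i.e.\ that the shifts sum to zero. With the shifts $+y/a$ and $-y/a$ as printed, that sum is $y(a-b)/a\neq0$, so $L$ and $R$ acquire different multipliers under $z\mapsto z+m\pi\tau$, and your one-dimensionality argument does not apply to $L/R$.

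This is almost certainly a transcription slip in the paper's statement of Zeng's result: the second shift should be $-y/b$, giving $a\cdot(y/a)+b\cdot(-y/b)=0$. That is also what is forced by the paper's own Theorem~\ref{chanliu}, which explicitly generalizes Zeng's theorem and carries the hypothesis $y_1+\cdots+y_n=0$. With $-y/b$ in place of $-y/a$, your argument goes through verbatim; as printed, the identity is false for generic $y$ (the left side is a theta function of $z$ with a nontrivially shifted characteristic, not a multiple of $\theta_3(mnz\,|\,m^2n\tau)$).
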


Recently, S. H. Chan and Z.-G. Liu \cite{chanliu} eliminated the
unnecessary restrictions in Zeng's Theorem \ref{zeng} and added many
free parameters into it. S. H. Chan and Z.-G. Liu \cite{chanliu}
extended Zeng's theorem to the following more general and concise
form.
\begin{thm} \label{chanliu}
Suppose $y_1,y_2,\cdots,y_n$ are $n$ complex numbers such that $y_1+
y_2+ \cdots+ y_n=0$. Then there exists a quantity
$G_{m,n}(y_1,y_2,\cdots,y_n|\tau)$ such that
\begin{equation*}
\sum_{k=0}^{mn-1}\prod_{j=1}^{n} \theta_3\left(z+y_j+{k\pi\over
mn}\big|\tau\right)=
G_{m,n}(y_1,y_2,\cdots,y_n|\tau)\theta_3(mnz|m^2n\tau). \label{1eql}
\end{equation*}
\end{thm}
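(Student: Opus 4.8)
The plan is to regard the left-hand side as an entire function of $z$ and to pin it down by quasi-periodicity, reducing the identity to Liouville's theorem for elliptic functions. Write $F(z)$ for the left-hand side and $g(z)=\theta_3(mnz|m^2n\tau)$. Both are finite combinations of products of theta functions, hence entire in $z$, so it suffices to show that the ratio $\Phi(z)=F(z)/g(z)$ does not depend on $z$; that value is then the asserted $G_{m,n}(y_1,\dots,y_n|\tau)$, which manifestly depends only on $y_1,\dots,y_n$ and $\tau$. I will compare the behaviour of $F$ and $g$ under the two generators of the lattice $\Lambda=\frac{\pi}{mn}\mathbb{Z}+\pi m\tau\,\mathbb{Z}$, which is a genuine lattice since $\mathrm{Im}\,\tau>0$.

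For the generator $\frac{\pi}{mn}$, the substitution $z\mapsto z+\frac{\pi}{mn}$ sends the summand of index $k$ to that of index $k+1$; since $\theta_3(w+\pi|\tau)=\theta_3(w|\tau)$ by (\ref{pi}), the would-be term $k=mn$ has all arguments shifted by $\pi$ and so coincides with the term $k=0$, whence $F(z+\frac{\pi}{mn})=F(z)$. Likewise $g(z+\frac{\pi}{mn})=\theta_3(mnz+\pi|m^2n\tau)=g(z)$. For the generator $\pi m\tau$, I iterate (\ref{pitau}) to obtain $\theta_3(w+m\pi\tau|\tau)=q^{-m^2/2}e^{-2imw}\theta_3(w|\tau)$. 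Applying this to each of the $n$ factors of a fixed summand multiplies it by $q^{-nm^2/2}\exp\!\bigl(-2im\sum_{j=1}^{n}(z+y_j+\tfrac{k\pi}{mn})\bigr)$; invoking $\sum_{j=1}^{n}y_j=0$ collapses the exponent to $-2im(nz+\tfrac{k\pi}{m})=-2imnz-2\pi ik$, and the $k$-dependent phase $e^{-2\pi ik}=1$ drops out. Thus $F(z+\pi m\tau)=q^{-nm^2/2}e^{-2imnz}F(z)$, and the identical multiplier $q^{-m^2n/2}e^{-2imnz}$ arises for $g$ directly from (\ref{pitau}) with modulus $m^2n\tau$. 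Hence $\Phi$ is invariant under both generators, i.e. $\Phi$ is elliptic with period lattice $\Lambda$.

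It remains to rule out poles. As $F$ is entire, the poles of $\Phi$ can only sit at the zeros of $g$. The substitution $w=mnz$ shows that when $z$ runs through a fundamental cell of $\Lambda$, the variable $w$ runs through a fundamental cell of $\pi\mathbb{Z}+\pi m^2n\tau\,\mathbb{Z}$, which is exactly the natural period lattice of $\theta_3(\cdot|m^2n\tau)$; in such a cell $\theta_3$ has a single simple zero. Therefore $g$ has one simple zero per fundamental cell, so $\Phi$ can have at most one simple pole there. But the sum of the residues of an elliptic function over a fundamental cell vanishes, while a lone simple pole would contribute a nonzero residue; hence $\Phi$ has no pole at all. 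A holomorphic doubly periodic function is bounded and therefore constant by Liouville's theorem, and setting $G_{m,n}(y_1,\dots,y_n|\tau)=\Phi$ completes the argument.

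The step I expect to be the crux is the transformation under $z\mapsto z+\pi m\tau$, where two features must conspire: the normalization $\sum_{j=1}^{n}y_j=0$ is exactly what removes the term $-2im\sum_j y_j$ and makes the multiplier of $F$ coincide with that of $g$ (otherwise the ratio would pick up a stray constant factor under $\pi m\tau$ and fail to be elliptic), and the integer factor $m$ in the period turns the residual dependence on $k$ into $e^{-2\pi ik}=1$ (were the period only $\pi\tau$, the surviving phase $e^{-2\pi ik/m}$ would prevent the summands from transforming uniformly). Once these are in place, the zero-count and residue argument, together with Liouville, are routine.
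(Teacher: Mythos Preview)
Your argument is correct. Note, however, that the paper does not itself prove this statement: Theorem~\ref{chanliu} is quoted from Chan and Liu \cite{chanliu} as background, so there is no in-paper proof to compare against directly. That said, your method is precisely the one the paper employs for its own main result, Theorem~\ref{fund} (the alternating analogue): form the ratio of the sum with the right-hand theta factor, check the two quasi-periods to see that the ratio is elliptic, observe that the denominator contributes a single simple zero per fundamental cell, and invoke the standard fact that an elliptic function with at most one pole is constant. The only cosmetic difference is that the paper first rescales $z\mapsto z/(mn)$ and $\tau\mapsto\tau/(m^2n)$ so that the relevant periods become the standard $\pi$ and $\pi\tau$, whereas you work directly with the lattice $\tfrac{\pi}{mn}\,\mathbb{Z}+\pi m\tau\,\mathbb{Z}$; the substance is identical.
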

Boon \textit{et al.} \cite{bzz} also obtained the following
alternate circular summation formula (see the second idenitity in
\cite[Eq. (8)]{bzz}.
\begin{thm} \label{boona}
For any positive integer $n$, we have
\begin{equation}    \label{boon2}
\sum_{k=0}^{2m-1}(-1)^k\theta_3\left(z+{k\pi\over 2m}|\tau\right)
=2m\theta_2 (2mz|4m^2\tau).
\end{equation}
\end{thm}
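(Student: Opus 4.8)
The plan is to prove the identity by expanding the left-hand side into its defining $q$-series and isolating the surviving Fourier modes with a root-of-unity filter; this delivers the right-hand side together with the explicit constant $2m$ in one stroke. Writing $S(z)$ for the left-hand side and inserting the series $\theta_3(z|\tau)=\sum_{n=-\infty}^{\infty}q^{n^2/2}e^{2niz}$ into each summand, the sum over $k$ is finite, so interchanging the two summations is immediate and yields
\[
S(z)=\sum_{n=-\infty}^{\infty}q^{n^2/2}e^{2niz}\left(\sum_{k=0}^{2m-1}(-1)^{k}e^{ink\pi/m}\right).
\]
Everything then reduces to the inner geometric sum in $k$.

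The key step is to evaluate that inner sum. Writing $(-1)^{k}=e^{ik\pi}$, its common ratio is $r=e^{i\pi(m+n)/m}$, and since $r^{2m}=e^{2\pi i(m+n)}=1$ the sum equals $2m$ when $r=1$ and $0$ otherwise. The condition $r=1$ is exactly $n\equiv m\pmod{2m}$, i.e. $n=jm$ with $j$ odd. Hence only the modes $n=jm$ ($j$ odd) survive, each with weight $2m$, giving
\[
S(z)=2m\sum_{j\ \mathrm{odd}}q^{m^{2}j^{2}/2}e^{2mijz}.
\]

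Finally I would identify the surviving series as a theta function. Expanding the right-hand side via $\theta_2(w|\tau')=q'^{1/8}\sum_{n}q'^{n(n+1)/2}e^{(2n+1)iw}$ with $w=2mz$ and $\tau'=4m^{2}\tau$ (so $q'=q^{4m^{2}}$), the $q$-exponent collapses through the identity $\tfrac12+2n(n+1)=\tfrac{(2n+1)^{2}}{2}$, producing $\theta_2(2mz|4m^{2}\tau)=\sum_{n}q^{m^{2}(2n+1)^{2}/2}e^{2mi(2n+1)z}$. Setting $j=2n+1$ matches this term-by-term with the bracketed series above, so $S(z)=2m\,\theta_2(2mz|4m^{2}\tau)$, which is (\ref{boon2}).

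I expect the main obstacle to be the bookkeeping around the root-of-unity filter: one must correctly pin down the residue condition $n\equiv m\pmod{2m}$ (rather than $n\equiv0$, which is what the non-alternating sum would produce) and confirm that the surviving weight is precisely $2m$, since this is exactly what fixes the explicit constant on the right. A more structural alternative would avoid series entirely: using (\ref{pi}) and (\ref{pitau}) one checks by reindexing that both sides satisfy $S(z+\tfrac{\pi}{2m})=-S(z)$ and $S(z+2m\pi\tau)=q^{-2m^{2}}e^{-4miz}S(z)$, so their quotient is elliptic with periods $\tfrac{\pi}{2m}$ and $2m\pi\tau$; since $\theta_2(2mz|4m^{2}\tau)$ has a single simple zero in each such period parallelogram, the residue theorem forces the quotient to be holomorphic, hence constant, and one then evaluates the constant. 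The direct $q$-series computation is preferable here precisely because it hands over the value $2m$ for free.
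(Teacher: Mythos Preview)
Your proof is correct. The root-of-unity filter is handled cleanly, the residue condition $n\equiv m\pmod{2m}$ is right, and the identification of the surviving odd-index series with $\theta_2(2mz|4m^2\tau)$ is verified term by term.

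The paper, however, does not prove this identity directly. It obtains (\ref{boon2}) in one line by setting $n=1$ in Case~1 of the main Theorem~\ref{fund} (equation (\ref{alter1})); with $n=1$ the single constraint $s_1=m$ in (\ref{coe}) collapses $H_{2m,1}(0|\tau)$ to $2m\,q^{-m^2/2}q^{m^2/2}=2m$. Theorem~\ref{fund} itself is proved by the elliptic-function argument you sketch as your ``structural alternative'': one checks the quasi-periodicity of the alternating sum, divides by $\theta_2$, observes the quotient has at most one pole in a fundamental parallelogram and is therefore constant, and then determines the constant by comparing the coefficient of $e^{iz}$. So what you present as a secondary option is essentially the paper's general method specialized to $n=1$, while your primary argument is a self-contained $q$-series computation that bypasses the elliptic machinery entirely. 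Your route is more elementary and delivers the constant $2m$ without a separate coefficient comparison; the paper's route costs nothing extra because Theorem~\ref{fund} is already in hand for the rest of the work.
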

Motivated by  \cite{bzz} and \cite{chanliu}, we obtain the following
general alternate circular summation formula of theta functions.

\begin{thm}   \label{fund}
Suppose that $m$ and $n$ are any  positive integers such that $mn$ are even and $y_1, y_2,
\cdots, y_n$ are $n$ complex numbers such that $y_1+ y_2+ \cdots+
y_n=0$. Then we have
\begin{equation}   \label{alter}
 \sum_{k=0}^{mn-1}(-1)^k\prod_{j=1}^n\theta_3\left(z+y_j+{k\pi\over
 mn}|\tau\right)
 =H_{m,n}(y_1,y_2,\cdots,y_n|\tau)\theta_2 (mnz|m^2n\tau),
\end{equation}
where
\begin{equation}     \label{coe}
H_{m,n}(y_1,y_2,\cdots,y_n|\tau) =mnq^{-\frac{m^2n}{8}}
\sum_{\substack{s_1,\cdots,s_n=-\infty\\s_1+\cdots+s_n={mn\over2}}}^\infty
q^{{1\over2}(s_1^2+s_2^2+\cdots+s_n^2)}e^{2i(s_1y_1+s_2y_2+\cdots+s_ny_n)}
.\end{equation}
\end{thm}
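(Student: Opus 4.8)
The plan is to fix $\tau$ and the parameters $y_j$ and to regard the left-hand side
\[
F(z):=\sum_{k=0}^{mn-1}(-1)^k\prod_{j=1}^n\theta_3\Bigl(z+y_j+\tfrac{k\pi}{mn}\,\Big|\,\tau\Bigr)
\]
as an entire function of $z$. I would show that $F$ and the function $\theta_2(mnz|m^2n\tau)$ satisfy \emph{exactly the same} pair of quasi-periodicity relations in $z$, deduce that their quotient is constant, and then extract that constant from a single Fourier coefficient. This is the standard route for circular-summation identities; the essential feature here is that the alternating factor $(-1)^k$ together with the parity hypothesis on $mn$ is what produces $\theta_2$ rather than $\theta_3$ on the right.

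The two relations I would establish are the following. For the real direction, replace $z$ by $z+\pi/(mn)$ and shift the summation index by one; the boundary term then involves $\theta_3(z+y_j+\pi|\tau)=\theta_3(z+y_j|\tau)$ by \eqref{pi}, and since $mn$ is even we have $(-1)^{mn}=(-1)^0$, so this term re-closes the sum and yields $F(z+\pi/(mn))=-F(z)$, precisely the anti-period that \eqref{pi} assigns to $\theta_2(mnz|m^2n\tau)$. (This is the only place the hypothesis $mn$ even is used; without it the computation would land on $\theta_3$.) For the imaginary direction, iterate \eqref{pitau} $m$ times to get $\theta_3(w+m\pi\tau|\tau)=q^{-m^2/2}e^{-2imw}\theta_3(w|\tau)$, apply this to each of the $n$ factors, and multiply; the accumulated exponential is $q^{-m^2n/2}\exp\bigl(-2im\sum_j(z+y_j+k\pi/(mn))\bigr)$, which collapses to $q^{-m^2n/2}e^{-2imnz}$ because $\sum_j y_j=0$ and $e^{-2\pi ik}=1$. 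Being independent of $k$, this factor pulls out of the sum, giving $F(z+m\pi\tau)=q^{-m^2n/2}e^{-2imnz}F(z)$, the same factor \eqref{pitau} assigns to $\theta_2(mnz|m^2n\tau)$ (note $\pi\cdot m^2n\tau$ is exactly one $\tau$-quasi-period of that theta).

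With both relations in hand, the quotient $R(z):=F(z)/\theta_2(mnz|m^2n\tau)$ is invariant under $z\mapsto z+\pi/(mn)$ and $z\mapsto z+m\pi\tau$, hence elliptic for the lattice $L=\tfrac{\pi}{mn}\mathbb{Z}+m\pi\tau\mathbb{Z}$. Reading the zeros of $\theta_2$ off \eqref{thetap} shows that $\theta_2(mnz|m^2n\tau)$ has, modulo $L$, a single simple zero (at $z=\pi/(2mn)$), so $R$ has at most one simple pole per fundamental cell; since no nonconstant elliptic function can have a single simple pole, $R$ must be constant, and I would name it $H_{m,n}(y_1,\ldots,y_n|\tau)$. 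The delicate point of the whole argument is exactly this uniqueness step together with the correct count and location of the theta zero; the heaviest computation is the $z\mapsto z+m\pi\tau$ cancellation, where one must track the exponentials carefully and invoke $\sum_j y_j=0$.

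Finally I would pin down $H$ by Fourier expansion in $z$. Writing $\theta_3(z+y_j+k\pi/(mn)|\tau)=\sum_{s_j}q^{s_j^2/2}e^{2is_j(z+y_j+k\pi/(mn))}$, expanding the product, and summing over $k$ against $(-1)^k$, the inner geometric sum $\sum_{k=0}^{mn-1}e^{i\pi k(1+2S/(mn))}$ with $S=s_1+\cdots+s_n$ equals $mn$ when $S\equiv mn/2\pmod{mn}$ and $0$ otherwise. Thus $F$ carries only harmonics $e^{2iSz}$ with $S\equiv mn/2$, matching the exponents $(2N+1)mn/2$ of $\theta_2(mnz|m^2n\tau)$; comparing the coefficient of the lowest such harmonic $e^{imnz}$ (the block $S=mn/2$) on the two sides delivers exactly \eqref{coe}. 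As a consistency check, the bijection $s_j\mapsto s_j+m$ raises $S$ by $mn$ and, since $\sum_j y_j=0$, multiplies the summand by $q^{m^2n(N+1)}$, reproducing the ratio $q^{m^2nN(N+1)/2}$ of successive $\theta_2$-coefficients and thereby reconfirming that the quotient $R$ is constant.
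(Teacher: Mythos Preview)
Your proof is correct and follows essentially the same route as the paper: verify matching quasi-periodicities, conclude the quotient is elliptic with at most one simple pole hence constant, then read off the constant from a Fourier coefficient. The only cosmetic difference is that the paper first rescales $z\mapsto z/(mn)$ and $\tau\mapsto\tau/(m^2n)$ so as to work with the standard periods $\pi,\pi\tau$ of $\theta_2(z|\tau)$, whereas you work directly with the lattice $\tfrac{\pi}{mn}\mathbb{Z}+m\pi\tau\mathbb{Z}$; your added consistency check via $s_j\mapsto s_j+m$ is a nice extra but not needed once the elliptic-function argument has already established constancy.
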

Note that if $m$ and $n$ are any  positive integers such that $mn$ are odd, the summations on the left-hand
side of (\ref{alter}) are not ``circular". For convenience, in what
follows, we always denote $H_{m,n}(0,0,\cdots,0|\tau)$  simply as
$H_{m,n}(\tau)$.

 The following sections will be organized as follows. In Section \ref{proof}, the proof of Theorem \ref{fund} will be given.
 In Section \ref{sproof}, we will deduce Theorem \ref{boona} and the following
Corollary \ref{gc} and \ref{2m1} from Theorem \ref{fund}.
\begin{cor}\label{gc} We have
\begin{eqnarray}
&&\sum_{k=0}^{2n-1}(-1)^k\theta_3^{2n}
\left(z+{k\pi\over2n}|\tau\right)=H_{1,2n}(\tau)\theta_2
(2nz|2n\tau). \label{ar14}
\end{eqnarray}
\end{cor}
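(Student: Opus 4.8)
The plan is to obtain Corollary \ref{gc} as the special case of the fundamental formula (\ref{alter}) corresponding to the parameter choice $m=1$, with the number of theta factors taken to be $2n$, and with $y_1=y_2=\cdots=y_{2n}=0$. Concretely, I would apply Theorem \ref{fund} with its $m$ set equal to $1$ and its running index $n$ (the number of factors in the product) set equal to $2n$, so that $\prod_{j=1}^{n}\theta_3(\cdots)$ becomes a product of $2n$ identical factors.

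Before invoking the theorem I would verify that its hypotheses hold. The only structural constraint is that the product $mn$ be even; with $m=1$ and factor count $2n$ this product equals $2n$, which is even for every positive integer $n$, so the requirement is satisfied. The balancing condition $y_1+y_2+\cdots+y_{2n}=0$ holds trivially once all the $y_j$ are set to $0$, so the theorem applies.

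With these choices the left-hand side of (\ref{alter}) collapses: each shift $y_j$ vanishes, the upper summation limit $mn-1$ becomes $2n-1$, and the product of $2n$ copies of $\theta_3(z+k\pi/(2n)\,|\,\tau)$ becomes $\theta_3^{2n}(z+k\pi/(2n)\,|\,\tau)$. On the right-hand side, the modulus $m^2n$ in $\theta_2(mnz\,|\,m^2n\tau)$ reduces to $2n$ and the argument $mnz$ reduces to $2nz$, yielding $\theta_2(2nz\,|\,2n\tau)$. Finally, by the convention recorded just after (\ref{coe}), namely that $H_{m,n}(0,0,\cdots,0|\tau)$ is abbreviated to $H_{m,n}(\tau)$, the coefficient $H_{1,2n}(0,\ldots,0|\tau)$ is precisely $H_{1,2n}(\tau)$. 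Assembling these pieces reproduces (\ref{ar14}) verbatim.

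Since Corollary \ref{gc} is a direct specialization of Theorem \ref{fund}, there is no genuine analytic obstacle to overcome; the only point requiring care is the bookkeeping of parameters. In particular, I would be careful to match the theorem's running index $n$ (counting the theta factors) against the exponent $2n$ appearing in the corollary, and to confirm the evenness requirement that $mn$ be even. This evenness is exactly what forces the right-hand side to be a $\theta_2$ rather than a $\theta_3$, and thus it is the crucial hypothesis linking the alternating sum on the left to the $\theta_2$ on the right.
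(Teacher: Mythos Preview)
Your proposal is correct and matches the paper's own argument essentially verbatim: the paper obtains Corollary \ref{gc} by specializing Case~2 of Theorem~\ref{fund} (i.e., the theorem with its $n$ replaced by $2n$) to $m=1$ and $y_j=0$, which is exactly the substitution you describe. Your additional verification of the evenness hypothesis and the balancing condition on the $y_j$ is accurate and makes the bookkeeping explicit.
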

We think  the following identity is very beautiful.
\begin{cor} \label{2m1}
We have
\begin{equation} \label{2m}
\sum\limits_{k=0}^{2m-1}(-1)^k\theta_3(z+y+{k\pi\over 2m}|\tau)
\theta_3(z-y+{k\pi\over 2m}|\tau) =\begin{cases}
2m\theta_2(2y|2\tau)\theta_2(2mz|2m^2\tau),&\mbox{if $m$ is odd;}\\
2m\theta_3(2y|2\tau)\theta_2(2mz|2m^2\tau),&\mbox{if $m$ is even.}
\end{cases}
\end{equation}
\end{cor}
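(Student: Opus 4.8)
The plan is to specialize the master identity (\ref{alter}) of Theorem \ref{fund} to $n=2$ with the symmetric choice $y_1=y$, $y_2=-y$, which clearly satisfies $y_1+y_2=0$. Here $n=2$ forces $mn=2m$ to be even for every positive integer $m$, so the hypothesis of Theorem \ref{fund} is automatically met and no extra restriction on $m$ is needed. With these substitutions the left-hand side of (\ref{alter}) becomes precisely the sum in (\ref{2m}), while the right-hand side becomes $H_{m,2}(y,-y|\tau)\,\theta_2(2mz|2m^2\tau)$. Hence the entire problem reduces to evaluating the coefficient and showing
\[
H_{m,2}(y,-y|\tau)=\begin{cases}2m\,\theta_2(2y|2\tau),&\text{$m$ odd},\\[2pt] 2m\,\theta_3(2y|2\tau),&\text{$m$ even}.\end{cases}
\]

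To compute $H_{m,2}(y,-y|\tau)$ I would start from (\ref{coe}) with $n=2$. The constraint $s_1+s_2=\tfrac{mn}{2}=m$ allows me to put $s_1=s$, $s_2=m-s$ and collapse the double sum into a single sum over $s\in\mathbb{Z}$. Using $s_1^2+s_2^2=2s^2-2ms+m^2$ and $s_1-s_2=2s-m$, and absorbing the prefactor $q^{-m^2 n/8}=q^{-m^2/4}$, a short manipulation yields
\[
H_{m,2}(y,-y|\tau)=2m\,q^{m^2/4}\sum_{s=-\infty}^{\infty}q^{\,s^2-ms}\,e^{2i(2s-m)y}.
\]
It then remains only to identify this theta-type series, and this is exactly where the parity of $m$ intervenes.

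The crux — and the one genuine subtlety — is completing the square in the exponent $s^2-ms$, whose center $s=m/2$ is an integer or a half-integer according as $m$ is even or odd; this dichotomy is precisely what distinguishes $\theta_3$ (integer lattice) from $\theta_2$ (shifted lattice). When $m=2\mu$ is even, the integral shift $r=s-\mu$ gives $s^2-ms=r^2-\mu^2$ and $2s-m=2r$, so the series equals $q^{-\mu^2}\sum_r q^{r^2}e^{2r i(2y)}=q^{-m^2/4}\theta_3(2y|2\tau)$, and the factor $q^{m^2/4}$ cancels to leave $2m\,\theta_3(2y|2\tau)$. When $m=2\mu+1$ is odd, the substitution $s=n+\mu+1$ produces the half-integer pattern $s^2-ms=n(n+1)-\tfrac{m^2-1}{4}$ and $2s-m=2n+1$, so the series becomes $q^{-(m^2-1)/4}\sum_n q^{n(n+1)}e^{(2n+1)i(2y)}=q^{-m^2/4}\theta_2(2y|2\tau)$, and again the prefactor cancels to give $2m\,\theta_2(2y|2\tau)$. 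Matching these two evaluations against the displayed form of $H_{m,2}$ finishes the proof. I expect the main obstacle to be purely the bookkeeping of the $q$-powers, so that $q^{m^2/4}$ cancels cleanly in both parities; once the square is completed the two remaining sums are literally the series definitions of $\theta_3(2y|2\tau)$ and $\theta_2(2y|2\tau)$ at modulus $2\tau$.
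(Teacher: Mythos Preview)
Your proposal is correct and follows essentially the same route as the paper: specialize Theorem~\ref{fund} to $n=2$ with $y_1=y$, $y_2=-y$, and then evaluate $H_{m,2}(y,-y|\tau)$ from (\ref{coe}) by collapsing the constrained double sum to a single theta series. The only cosmetic difference is that the paper first recognizes the sum uniformly as $2m\,q^{m^2/4}e^{-2imy}\theta_3(2y-m\pi\tau\,|\,2\tau)$ and then invokes the half-period shifts (\ref{pitau}), (\ref{pitau/2}) to split into the two parity cases, whereas you perform the parity-dependent index shift by hand; the underlying computation is the same.
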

In Section \ref{sproof}, this identity will be discussed in detail.
We will obtain many modular identities from (\ref{2m}). In
particular, several identities in Ramanujan's notebook will be
recovered.

In Section 4, we will give the proofs of the following two
identities for $(q;q)_\infty^{2n}$, respectively, using Theorem
\ref{fund}.

\begin{cor}\label{q1} We have
\begin{eqnarray}  \label{qq}
(q;q)_\infty^{2n}=q^{-n}~ (q^{2n};q^{2n})_\infty
\sum_{\substack{s_1,s_2,\cdots,s_{2n}=-\infty
\\s_1+s_2+\cdots+s_{2n}=2n}}^\infty
q^{{1\over2}(s_1^2+s_2^2+\cdots+s_{2n}^2)} e^{{\pi
i\over2n}\sum\limits_{l=1}^n{(s_l-s_{n+l})(2l-1)}}.
\end{eqnarray}
\end{cor}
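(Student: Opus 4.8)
The plan is to read off the right-hand side of (\ref{qq}) as a special value of the coefficient $H_{m,n}$ in Theorem~\ref{fund}, and then to evaluate that same coefficient a second way by specializing $z$ in (\ref{alter}). Concretely, I would apply Theorem~\ref{fund} with $m=2$ and with the second index replaced by $2n$ (so that $mn=4n$ is even, as required), choosing the $2n$ parameters
\[
y_l=\frac{(2l-1)\pi}{4n},\qquad y_{n+l}=-\frac{(2l-1)\pi}{4n},\qquad l=1,2,\dots,n .
\]
These satisfy $\sum_{j=1}^{2n}y_j=0$. With this choice the exponent $2i\sum_j s_jy_j$ in (\ref{coe}) becomes exactly $\frac{\pi i}{2n}\sum_{l=1}^n(s_l-s_{n+l})(2l-1)$, the prefactor $mnq^{-m^2n/8}$ equals $4nq^{-n}$, and the constraint $s_1+\cdots=mn/2$ reads $s_1+\cdots+s_{2n}=2n$. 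Hence $H_{2,2n}(y_1,\dots,y_{2n}\,|\,\tau)$ is $4nq^{-n}$ times precisely the sum appearing in (\ref{qq}), and the corollary becomes equivalent to the single evaluation $H_{2,2n}(y_1,\dots,y_{2n}\,|\,\tau)=4n\,(q;q)_\infty^{2n}/(q^{2n};q^{2n})_\infty$.

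To obtain this value I would exploit that $H_{2,2n}$ is the $z$-independent ratio of the left side of (\ref{alter}) to $\theta_2(4nz\,|\,8n\tau)$. Since $y_{n+l}=-y_l$, the product over $j$ factors into conjugate pairs $\prod_{l=1}^n\theta_3(w+y_l\,|\,\tau)\theta_3(w-y_l\,|\,\tau)$ with $w=z+\frac{k\pi}{4n}$, and the $2n$ shifts $\pm y_l$ run over all odd multiples $\frac{r\pi}{4n}$, $r\in\{\pm1,\pm3,\dots,\pm(2n-1)\}$. Inserting the product representation (\ref{thetap}) of $\theta_3$, the phases that occur are $e^{\pm i(2l-1)\pi/2n}$, which are exactly the $2n$ distinct $2n$-th roots of $-1$. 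The key step is then the elementary factorization $\prod_{\zeta^{2n}=-1}(1+X\zeta)=1+X^{2n}$; applied level by level in the $q$-expansion, together with $(q^{1/2+m})^{2n}=q^{n(2m+1)}=(q^{2n})^{m+1/2}$, it rebuilds a $\theta_3$ of nome $q^{2n}$ and collapses the product into
\[
\prod_{j=1}^{2n}\theta_3\!\left(w+y_j\,\big|\,\tau\right)=\frac{(q;q)_\infty^{2n}}{(q^{2n};q^{2n})_\infty}\,\theta_3\!\left(2nw\,\big|\,2n\tau\right).
\]

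It then remains to sum $\sum_{k=0}^{4n-1}(-1)^k\theta_3(2nz+\frac{k\pi}{2}\,|\,2n\tau)$, for which I would need only the stated transformation rules. Writing $v=2nz$, $\sigma=2n\tau$ and grouping $k$ modulo $4$, the period relation in (\ref{pi}) together with the quarter-shift relations $\theta_3(z+\frac{\pi}{2}|\tau)=\theta_4(z|\tau)$ in (\ref{pi/2}) reduces the inner sum to $2n\big(\theta_3(v|\sigma)-\theta_4(v|\sigma)\big)$. A direct comparison of series gives $\theta_3(v|\sigma)-\theta_4(v|\sigma)=2\theta_2(2v|4\sigma)$, so the whole sum is $4n\,\theta_2(4nz\,|\,8n\tau)$. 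Cancelling $\theta_2(4nz\,|\,8n\tau)$ delivers the required value of $H_{2,2n}$, and unwinding the first paragraph yields (\ref{qq}).

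The only genuinely non-mechanical point is the product collapse in the second paragraph: once one recognizes that the shift-phases are the $2n$-th roots of $-1$ and invokes $\prod_{\zeta^{2n}=-1}(1+X\zeta)=1+X^{2n}$, everything else is bookkeeping with (\ref{pi}), (\ref{pi/2}) and the definitions of the theta functions. I would expect the step needing the most care to be checking that the collapsed product is genuinely a single $\theta_3$ of modulus $2n\tau$ (that the root-of-unity identity indeed applies uniformly at every power of $q$, and not merely to a partial product), since this is where the factor $(q^{2n};q^{2n})_\infty^{-1}$ is produced.
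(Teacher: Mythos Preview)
Your proof is correct and follows essentially the same route as the paper: choose the same $y_j$'s, collapse the $2n$-fold product into $\frac{(q;q)_\infty^{2n}}{(q^{2n};q^{2n})_\infty}\theta_3(2nw\mid 2n\tau)$, evaluate the resulting alternating theta-sum, and compare with the coefficient formula~(\ref{coe}). The differences are cosmetic. First, the paper carries a general $m$ through~(\ref{alter2}), observes the odd-$m$ case vanishes, replaces $m$ by $2m$, and only at the end sets $m=1$; you go straight to the case that matters (your $m=2$ is the paper's final $m=1$ after the substitution), which is cleaner. Second, for the alternating sum the paper invokes Theorem~\ref{boona} (equation~(\ref{boon2})), while you rederive its $m=1$ instance $\theta_3-\theta_4=2\theta_2(2v\mid 4\sigma)$ from the series; note that grouping $k$ modulo~$2$ already suffices here, since $\theta_3$ has period $\pi$. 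Third, you actually justify the product collapse via $\prod_{\zeta^{2n}=-1}(1+X\zeta)=1+X^{2n}$, whereas the paper simply asserts the identity; your explanation is correct and fills a gap the paper leaves implicit.
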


\begin{cor} \label{q2}
We have
\begin{eqnarray}  \label{qqq}
(q^{2n};q^{2n})_\infty^{2n}=q^{-{n\over2}} (q;q)_\infty
\sum_{\substack{s_1,s_2,\cdots,s_{2n}=-\infty
\\s_1+s_2+\cdots+s_{2n}=n}}^\infty
q^{n(s_1^2+s_2^2+\cdots+s_{2n}^2)
+{1\over4}\sum\limits_{l=1}^n{(s_l-s_{n+l})(2l-1)}}.
\end{eqnarray}
\end{cor}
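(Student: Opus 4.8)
The plan is to specialise the master identity (\ref{alter})--(\ref{coe}) so that its series side becomes the sum in (\ref{qqq}), and then to read off the coefficient $H$ as an infinite product by evaluating the left-hand side at one convenient value of $z$. In Theorem \ref{fund} I would take $m=1$, replace the index $n$ by $2n$ (so that there are $2n$ theta factors and $mn=2n$ is even), and replace $\tau$ by $2n\tau$; the base variable then becomes $q^{2n}$, turning the Gaussian exponent $\tfrac12\sum s_j^2$ of (\ref{coe}) into $n\sum s_j^2$, as required. I would choose the parameters in the antisymmetric pairs $y_l=-y_{n+l}$, with each $y_l$ a suitable multiple of $(2l-1)\tau$, which makes $\sum_j y_j=0$ automatic and collapses the phase $e^{2i\sum_j s_j y_j}$ in (\ref{coe}) to precisely the factor $q^{\frac14\sum_{l=1}^n(s_l-s_{n+l})(2l-1)}$ of (\ref{qqq}). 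After these substitutions the series in (\ref{coe}) equals, up to the explicit prefactor $2n\,q^{-n^2/2}$, the sum on the right of (\ref{qqq}). This is the natural companion of the specialisation behind Corollary \ref{q1}: there one keeps $\tau$ fixed and takes $m=2$, producing a root-of-unity phase, whereas here one rescales $\tau$ and produces a $q$-power phase.

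Since $H_{1,2n}(y_1,\dots,y_{2n}|2n\tau)$ does not depend on $z$, I would next set $z=0$ in the specialised (\ref{alter}) and divide by $\theta_2(0|4n^2\tau)$. Writing each factor through the product representation (\ref{thetap}), $\theta_3(y_j+k\pi/(2n)|2n\tau)=(q^{2n};q^{2n})_\infty\,(-q^{n}e^{2i(y_j+k\pi/(2n))},-q^{n}e^{-2i(y_j+k\pi/(2n))};q^{2n})_\infty$, the common factor $(q^{2n};q^{2n})_\infty^{2n}$ pulls out of the whole alternating sum at once; this is exactly where the left side of (\ref{qqq}) comes from. What remains is to show that the residual alternating sum of the finite theta-products equals $2n\,\theta_2(0|4n^2\tau)/(q^{(n^2-n)/2}(q;q)_\infty)$; equating the two resulting expressions for $H$ and cancelling the prefactors then yields (\ref{qqq}).

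The main obstacle is this final reduction of the residual sum to a single quotient of products. Already for $n=1$ it rests on a theta duplication, $\theta_3(w|2\tau)^2-\theta_4(w|2\tau)^2=2\,\theta_2(2w|4\tau)\,\theta_2(0|4\tau)$, evaluated at $w=\pi\tau/2$ and combined with the special value $\psi(q)=\theta_2(\pi\tau|4\tau)$ recorded just before (\ref{ssf}); this gives $H=2\psi(q)$ and hence the $n=1$ case. For general $n$ I expect to need a systematic form of this collapse: one carries out the sum over $k$ using that the shifts $e^{ik\pi/n}$ range over the $2n$-th roots of unity, and regroups the $2n$ factors by repeated application of the addition formulas for theta functions, the quasi-periodicity relations (\ref{pi})--(\ref{pitau/2}) being used throughout to fold shifted $\theta_3$'s into $\theta_2$ and $\theta_4$ and to track the overall power of $q$. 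Once the residual sum is identified with the stated $\theta_2/(q;q)_\infty$ quotient, the identity (\ref{qqq}) follows with no further analytic input.
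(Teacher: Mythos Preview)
Your specialisation is the right one: taking $2n$ factors, choosing the $y_j$ in antisymmetric pairs proportional to $(2l-1)\tau$, and matching the series in (\ref{coe}) to the sum in (\ref{qqq}) is exactly what is needed on the series side. The gap is in your plan for the product side.

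You propose to set $z=0$, expand each $\theta_3$-factor through (\ref{thetap}), pull out a common $(q^{2n};q^{2n})_\infty^{2n}$, and then collapse the residual alternating sum over $k$ of a $2n$-fold product of $q$-Pochhammer symbols. You correctly flag this as ``the main obstacle'', but what you offer for general $n$---regrouping via addition formulas and quasi-periodicity---is not a workable scheme: the residual sum is a genuine $2n$-fold product summed over $2n$ shifts, and there is no systematic way to reduce it by pairwise addition formulas alone. Your $n=1$ check works only because the two-factor case is already a known duplication identity.

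The idea you are missing is that the specific choice $y_l=\dfrac{(2l-1)\pi\tau}{4n}$, $y_{n+l}=-y_l$ (keeping the original $\tau$, not $2n\tau$) makes the $2n$-fold product collapse \emph{before} one sums over $k$: from (\ref{thetap}) and the elementary factorisation of $(\,\cdot\,;q^{1/(2n)})_\infty$ into $2n$ pieces with modulus $q$, one has
\[
\prod_{l=1}^{n}\theta_3\Bigl(x+\tfrac{(2l-1)\pi\tau}{4n}\Bigm|\tau\Bigr)\theta_3\Bigl(x-\tfrac{(2l-1)\pi\tau}{4n}\Bigm|\tau\Bigr)
=\frac{(q;q)_\infty^{2n}}{(q^{1/(2n)};q^{1/(2n)})_\infty}\,\theta_3\Bigl(x\Bigm|\tfrac{\tau}{2n}\Bigr).
\]
With this, the left side of (\ref{alter2}) becomes a constant times $\sum_{k}(-1)^k\theta_3(z+\tfrac{k\pi}{2mn}\mid \tfrac{\tau}{2n})$, which is precisely Theorem~\ref{boona}; the coefficient $H$ then drops out as an $\eta$-quotient with no residual sum to evaluate. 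Comparing with (\ref{coe}) and finally rescaling $\tau\mapsto 2n\tau$ gives (\ref{qqq}). This is the paper's route, and it is the companion of the Corollary~\ref{q1} argument (there the $y_j$ are real multiples of $\pi$ and the product collapses to $\theta_3(2nz\mid 2n\tau)$); your proposal had the right parameters but tried to sum over $k$ first and collapse afterwards, which is the wrong order.
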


\section{The proof of Theorem \ref{fund}}\label{proof}
\begin{proof}
For any  positive integers  $m$ and $n$  such that $mn$ are even, we set
$$
g(z)=\sum_{k=0}^{mn-1}(-1)^k\prod_{j=1}^n\theta_3 \left({z\over
mn}+y_j+{k\pi\over mn}|{\tau\over m^2n}\right).
$$
Then we find that
\begin{eqnarray*}
g(z+\pi)&=&\sum_{k=0}^{mn-1}(-1)^k\prod_{j=1}^n\theta_3
            \left({z\over mn}+y_j+{(k+1)\pi\over mn}|{\tau\over
            m^2n}\right)\\
        &=& \sum_{k=1}^{mn}(-1)^{k-1}\prod_{j=1}^n\theta_3
            \left({z\over mn}+y_j+{k\pi\over mn}|{\tau\over
            m^2n}\right)\\
        &=&\sum_{k=1}^{mn-1}(-1)^{k-1}\prod_{j=1}^n\theta_3
            \left({z\over mn}+y_j+{k\pi\over mn}|{\tau\over
            m^2n}\right)-
            \prod_{j=1}^n
            \theta_3\left({z\over mn}+y_j+\pi|{\tau\over m^2n}\right)\\
        &=&-\sum_{k=0}^{mn-1}(-1)^{k}\prod_{j=1}^n\theta_3
            \left({z\over mn}+y_j+{k\pi\over mn}|{\tau\over
            m^2n}\right)\\
        &=&-g(z).
\end{eqnarray*}
In the above deduction, we have used the condition that $mn$ is even
and the property of $\theta_3(z|\tau)$ in (\ref{pi}). Using the
property of $\theta_3(z|\tau)$ in (\ref{pitau}), we also have
\begin{eqnarray*}
g(z+\pi\tau)&=&\sum_{k=0}^{mn-1}(-1)^k\prod_{j=1}^n\theta_3
               \left({z\over mn}+y_j+{k\pi\over mn}+{\pi\tau\over mn}
               |{\tau\over m^2n}\right)\\
            &=&\sum_{k=0}^{mn-1}(-1)^k\prod_{j=1}^n
               \left[q^{-{1\over 2n}}
               e^{-2im({{z\over mn}+y_j+{k\pi\over mn}})}
               \theta_3\left({z\over mn}+y_j+{k\pi\over mn}
               |{\tau\over m^2n}\right)\right]\\
            &=&q^{-{1\over 2}}e^{-2iz}\sum_{k=0}^{mn-1}(-1)^k\prod_{j=1}^n
               \theta_3\left({z\over mn}+y_j+{k\pi\over mn}
               |{\tau\over m^2n}\right)\\
           &=&q^{-{1\over 2}}e^{-2iz}g(z).
\end{eqnarray*}
Therefore the function
\begin{eqnarray*}
F(z)=\frac{g(z)}{\theta_2(z|\tau)}
\end{eqnarray*}
is an elliptic function of $z$. It is well-known that
$\theta_2(z|\tau)$ has only a simple zero at $z={\pi\over2}$ in the
period parallelogram. This shows that $F(z)$ has at most one pole in
its period parallelogram. Hence $F(z)$ is independent of $z$, say,
it equals $S(y_1,y_2,\cdots,y_n|\tau). $ It follows that
\begin{equation}  \label{con}
\sum_{k=0}^{mn-1}(-1)^k\prod_{j=1}^n \theta_3\left({z\over
mn}+y_j+{k\pi\over mn} |{\tau\over m^2n}\right)
=S(y_1,y_2,\cdots,y_n|\tau)\theta_2 (z|\tau).
\end{equation}
Replacing $z$ and $\tau$ in the above identity by $mnz$ and
$m^2n\tau$, respectively,  gives (\ref{alter}).

Now it remains to determine $S(y_1,y_2,\cdots,y_n|\tau)$. To
complete this, we compare the coefficients of $e^{iz}$ on both sides
of (\ref{con}). Using the definition of $\theta_3(z|\tau)$, we
easily get
\begin{eqnarray*}
\lefteqn{ \prod_{j=1}^n
 \theta_3\left({z\over mn}+y_j+{k\pi\over mn}|{\tau\over
 m^2n}\right)}\hspace{1cm}\\
&=&\sum_{s_1,\cdots,s_n=-\infty}^{\infty}
q^{\frac{1}{2m^2n}(s_1^2+\cdots+s_n^2)}
  e^{\frac{2i(z+k\pi)}{mn}(s_1+\cdots+s_n)+2i(s_1y_1+\cdots+s_ny_n)}.\\
\end{eqnarray*}
Thence the coefficient of $e^{iz}$ of the above series is
$$
(-1)^k
\sum_{\substack{s_1,\cdots,s_n=-\infty\\s_1+\cdots+s_n={mn\over2}}}^\infty
q^{{1\over
2m^2n}(s_1^2+s_2^2+\cdots+s_n^2)}e^{2i(s_1y_1+s_2y_2+\cdots+s_ny_n)}.
$$
The coefficient of $e^{iz}$ on the left-hand side of (\ref{con}) is
$$
mn
\sum_{\substack{s_1,\cdots,s_n=-\infty\\s_1+\cdots+s_n={mn\over2}}}^\infty
q^{{1\over
2m^2n}(s_1^2+s_2^2+\cdots+s_n^2)}e^{2i(s_1y_1+s_2y_2+\cdots+s_ny_n)}.
$$
From the definition of $\theta_2(z|\tau)$, we have
$$
q^{1\over8}S(y_1,y_2,\cdots,y_n|\tau)=mn
\sum_{\substack{s_1,\cdots,s_n=-\infty\\s_1+\cdots+s_n={mn\over2}}}^\infty
q^{{1\over
2m^2n}(s_1^2+s_2^2+\cdots+s_n^2)}e^{2i(s_1y_1+s_2y_2+\cdots+s_ny_n)}.
$$
This is (\ref{coe}), where
$H(y_1,y_2,\cdots,y_n|\tau)=S(y_1,y_2,\cdots,y_n|m^2n\tau)$. This
completes the proof.
\end{proof}

In the above  proof, we have used the fact that an elliptic function
with at most one pole in its period parallelogram is a constant
(see, for example, \cite[p. 432]{whwa}).

Now, we split Theorem \ref{fund} into two cases according to the
parity of $m$ and $n$. Replacing $m$ in Theorem \ref{fund} by $2m$
gives

\textbf{\textit{Case 1 of Theorem \ref{fund}.}}~ Suppose that $m$
and $n$ are any positive integers and $y_1, y_2, \cdots, y_n$ are
$n$ complex numbers such that $y_1+ y_2+ \cdots+ y_n=0$. Then we
have
\begin{equation}   \label{alter1}
\sum_{k=0}^{2mn-1}(-1)^k\prod_{j=1}^n\theta_3\left(z+y_j+{k\pi\over
2mn}|\tau\right)=H_{2m,n}(y_1,y_2,\cdots,y_n|\tau)\theta_2
(2mnz|4m^2n\tau).
\end{equation}

We replace $n$ in Theorem \ref{fund} by $2n$ to obtain

\textbf{\textit{Case 2 of Theorem \ref{fund}.}}~
 Suppose that $m$ and $n$ are
any positive integers and $y_1, y_2, \cdots, y_{2n}$ are $2n$
complex numbers such that $y_1+ y_2+ \cdots+ y_{2n}=0$. Then we have
\begin{equation}   \label{alter2}
\sum_{k=0}^{2mn-1}(-1)^k\prod_{j=1}^{2n}\theta_3\left(z+y_j+{k\pi\over
2mn}|\tau\right)=H_{m,2n}(y_1,y_2,\cdots,y_{2n}|\tau)\theta_2
(2mnz|2m^2n\tau).
\end{equation}

Note that both $H_{2m,n}(y_1,y_2,\cdots,y_n|\tau)$ in (\ref{alter1})
and $H_{m,2n}(y_1,y_2,\cdots,y_{2n}|\tau)$ in (\ref{alter2}) are
defined by (\ref{coe}).

\section{Some special cases of the alternate summation\\
 formula 
 }\label{sproof}

\begin{proof}[\textbf{Proof of Theorem \ref{boona}}]
 Putting $n=1$ in (\ref{alter1}) gives Theorem \ref{boona}\ .
\end{proof}
\begin{proof}[\textbf{Proof of Corollary \ref{gc}}]
 Setting $m=1$ and $y_j=0$ in (\ref{alter2}) gives (\ref{ar14}).
\end{proof}
Replacing $z$ in (\ref{ar14}) by $z+{\pi+\pi\tau\over2}$ and then
using (\ref{pi/2}) and (\ref{pitau/2}), we get
$$
\sum_{k=0}^{2n-1}\theta_1^{2n}\left(z+{k\pi\over \label{ar2}
2n}|\tau\right)=H_{1,2n}(\tau)\theta_3 (2nz|2n\tau).
$$
The left-hand side of this identity is the same as that of the even
case of \cite[Thm. 4.2]{chanliung}. But there is a serious misprint
on the left-hand side of the identity in \cite[Thm. 4.2]{chanliung}.

\begin{proof}[\textbf{Proof of Corollary \ref{2m1}}]
From (\ref{coe}),  we have
 \begin{eqnarray*}
H_{m,2}(y,-y|\tau)&=&2mq^{-{m^2\over4}}\sum_{s=-\infty}^{\infty}
   q^{s^2+(m-s)^2\over2}e^{2iy[s-(m-s)]} \notag\\
&=&2mq^{m^2\over4}e^{-2imy}\sum_{s=-\infty}^{\infty}
   q^{s^2}e^{2is(y-m\pi\tau)} \notag\\
&=&2mq^{m^2\over4}e^{-2imy}\theta_3(2y-m\pi\tau|2\tau) \notag\\
&=&\begin{cases}
2m\theta_2(2y|2\tau),~&\mbox{if $m$ is odd;}\\
2m\theta_3(2y|2\tau),~&\mbox{if $m$ is even.}
\end{cases}
\notag
\end{eqnarray*}
Then setting $n=1$ in (\ref{alter2}), we obtain (\ref{2m}).
\end{proof}

Taking $m=1$ in (\ref{2m}) and then using (\ref{pi/2}) gives
\begin{prop}We have
\begin{eqnarray*}
&&\theta_3(z+y|\tau) \theta_3(z-y|\tau)-\theta_4(z+y|\tau)
\theta_4(z-y|\tau) =2\theta_2(2y|2\tau)\theta_2(2z|2\tau).
\end{eqnarray*}
\end{prop}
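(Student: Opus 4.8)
The plan is to obtain this identity simply by specializing the already-established Corollary~\ref{2m1} to the smallest even summation length, namely $m=1$. First I would set $m=1$ in (\ref{2m}). Since $m=1$ is odd, the right-hand side of (\ref{2m}) is governed by the first branch of the piecewise formula, so it reads $2\theta_2(2y|2\tau)\theta_2(2z|2\tau)$, which is precisely the right-hand side we are after. It then remains only to check that the left-hand side of (\ref{2m}) collapses to the desired difference of products of $\theta_3$ and $\theta_4$.

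For this, I would write out the sum on the left of (\ref{2m}) with $m=1$, which now runs over just the two values $k=0$ and $k=1$. The $k=0$ term contributes $\theta_3(z+y|\tau)\theta_3(z-y|\tau)$ directly. For the $k=1$ term, each of the two factors carries a half-period shift $\tfrac{\pi}{2}$, so I would invoke the relation $\theta_3(w+\tfrac{\pi}{2}|\tau)=\theta_4(w|\tau)$ from (\ref{pi/2}) with $w=z+y$ and $w=z-y$, respectively. Together with the sign $(-1)^1=-1$, this turns the $k=1$ term into $-\theta_4(z+y|\tau)\theta_4(z-y|\tau)$. Adding the two terms yields exactly the left-hand side of the Proposition.

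Since every ingredient---the closed form (\ref{2m}) and the half-period shift (\ref{pi/2})---is already available, there is essentially no real obstacle here; the proof is a one-line specialization of Corollary~\ref{2m1}. The only points requiring genuine care are the bookkeeping of the parity branch (confirming that $m=1$ selects the $\theta_2(2y|2\tau)$ case rather than the $\theta_3(2y|2\tau)$ case) and the correct simultaneous application of the half-period shift to both arguments $z+y$ and $z-y$. I would record those two checks and then read off the result.
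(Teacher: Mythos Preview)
Your proposal is correct and follows exactly the paper's own argument: the paper also derives the Proposition by taking $m=1$ in (\ref{2m}) and invoking the half-period shift $\theta_3(w+\tfrac{\pi}{2}|\tau)=\theta_4(w|\tau)$ from (\ref{pi/2}). Your added remarks about checking the parity branch and applying the shift to both factors simply make explicit what the paper leaves implicit.
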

This identity is equivalent to \cite[Eq. (16)]{enn} and \cite[Eq.
(1.3d), p327]{shen}. J. A. Ewell \cite[Eq. (1. 10)]{ewell} deduced a
sextuple product identity from this one. Z.-G. Liu and X.-M. Yang
\cite[Eq. (1.11) in Thm. 4]{liuyang} also deduced this identity from
the Schr\"oter formula. Using (\ref{pi/2}), (\ref{pitau/2}) and
Jacobi imaginary transformation to this identity, all the rest
identities in \cite[Thm. 4]{liuyang} can be deduced.

Setting $m=2$ in (\ref{2m}), and then, using (\ref{pi/2}), we get
the following remarkable identity, which contains many interesting
special cases.
\begin{prop}\label{4z} We have
\begin{eqnarray}
\lefteqn{\begin{array}{l} \theta_3(z+y|\tau)\theta_3(z-y|\tau)
-\theta_3(z+y+{\pi\over4}|\tau)\theta_3(z-y+{\pi\over4}|\tau)\quad
\notag
\\+\theta_4(z+y|\tau)\theta_4(z-y|\tau)
-\theta_4(z+y+{\pi\over4}|\tau)\theta_4(z-y+{\pi\over4}|\tau)\notag
\end{array}}\hspace{5.5cm}\\
&=&4\theta_3(2y|2\tau)\theta_2(4z|8\tau). \label{ord4}
\end{eqnarray}
\end{prop}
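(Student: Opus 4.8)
The plan is to obtain (\ref{ord4}) as nothing more than the special case $m=2$ of Corollary \ref{2m1}, so no new machinery is required beyond the identity (\ref{2m}) already proved. First I would set $m=2$ in (\ref{2m}). Since $m=2$ is even, the right-hand side falls into the second branch of the case distinction and equals $4\,\theta_3(2y|2\tau)\,\theta_2(4z|8\tau)$, which is exactly the right-hand side of (\ref{ord4}). With $m=2$ the summation index $k$ ranges over $0,1,2,3$, so the left-hand side becomes
$$
\sum_{k=0}^{3}(-1)^k\,\theta_3\left(z+y+\frac{k\pi}{4}\Big|\tau\right)\theta_3\left(z-y+\frac{k\pi}{4}\Big|\tau\right).
$$

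Next I would rewrite the four summands one by one. The terms $k=0$ and $k=1$ are already of the $\theta_3$ shape displayed in the first line of (\ref{ord4}), carrying signs $+$ and $-$ respectively. For $k=2$ the shift is exactly $\pi/2$, so applying the relation $\theta_3(w+\tfrac{\pi}{2}|\tau)=\theta_4(w|\tau)$ from (\ref{pi/2}) to each factor (once with $w=z+y$ and once with $w=z-y$) converts the $k=2$ term into $+\theta_4(z+y|\tau)\theta_4(z-y|\tau)$. For $k=3$ I would write $\tfrac{3\pi}{4}=\tfrac{\pi}{2}+\tfrac{\pi}{4}$ and again invoke (\ref{pi/2}), turning the $k=3$ term into $-\theta_4(z+y+\tfrac{\pi}{4}|\tau)\theta_4(z-y+\tfrac{\pi}{4}|\tau)$. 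Collecting the four rewritten summands reproduces precisely the left-hand side of (\ref{ord4}), and equating it with the even-branch right-hand side finishes the proof.

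Because the computation is entirely mechanical, there is no genuine analytic obstacle here; the only points demanding care are bookkeeping. In particular I must be sure to select the \emph{even} branch of (\ref{2m}), so that the prefactor carries $\theta_3(2y|2\tau)$ rather than $\theta_2(2y|2\tau)$, and I must apply the half-period shift (\ref{pi/2}) consistently to both the $z+y$ and the $z-y$ arguments in the $k=2$ and $k=3$ terms. Once these substitutions and the alternating signs are verified, (\ref{ord4}) drops out immediately.
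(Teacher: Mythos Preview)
Your proposal is correct and follows exactly the paper's own approach: the paper states that Proposition~\ref{4z} is obtained by ``Setting $m=2$ in (\ref{2m}), and then, using (\ref{pi/2}),'' which is precisely what you do, with the $k=2,3$ terms converted to $\theta_4$'s via the half-period shift.
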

We will show that the following identities can be deduced from
(\ref{ord4}). The first four identities can also be found in Berndt
\cite[Entry 25, p. 40]{berndt}. 
\begin{cor}
We have
\renewcommand{\labelenumi}{\rm (\alph{enumi})}
\begin{enumerate}
\item
$\varphi(q)\psi(q^2)=\psi^2(q)$,
\item
$\varphi(q)-\varphi(-q)=4q\psi(q^8)$,
\item
$\varphi(q)+\varphi(-q)=2\varphi(q^4)$,
\item
$\varphi^2(q)-\varphi^2(-q)=8q\psi^2(q^2)$,
\item
$\psi^2(q)-\varphi(-q)\psi(q^2)=4q\psi(q^2)\psi(q^8)$,
\item
$\psi^2(q)+\varphi(-q)\psi(q^2)=2\psi(q^2)\varphi(q^4)$.
\end{enumerate}    \label{mod}
\end{cor}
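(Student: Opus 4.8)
The plan is to isolate three \textbf{base} identities, (a), (b) and (c), each obtained from a single well-chosen specialization of (\ref{ord4}), and then to recover (d), (e) and (f) from these by purely algebraic manipulation. For the base identities the mechanism is always the same: substitute special values of the elliptic variable $z$ and the parameter $y$ (taken from the points $0,\tfrac{\pi}{4},\tfrac{\pi\tau}{2},\pi\tau$) into (\ref{ord4}), and then convert every theta value that appears into $\varphi(\pm q)$ and $\psi(\pm q)$ using the quasi-periodicity relations (\ref{pi})--(\ref{pitau/2}) and the product/series evaluations of $\varphi$ and $\psi$.

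To produce (b) I would put $z=0$ in (\ref{ord4}) and keep $y$ free. Pairing the two $\theta_3$-products with the two $\theta_4$-products and applying the companion of the Proposition proved just above, namely
$$\theta_3(u+v|\tau)\theta_3(u-v|\tau)+\theta_4(u+v|\tau)\theta_4(u-v|\tau)=2\theta_3(2u|2\tau)\theta_3(2v|2\tau),$$
which follows by the same elliptic-function argument (one pole in a period parallelogram) used for Theorem~\ref{fund}, the unshifted block collapses to $2\varphi(q)\theta_3(2y|2\tau)$ and the $\tfrac{\pi}{4}$-shifted block to $2\varphi(-q)\theta_3(2y|2\tau)$, since $\theta_3(\tfrac{\pi}{2}|2\tau)=\theta_4(0|2\tau)=\varphi(-q)$. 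The right-hand side is $4\theta_3(2y|2\tau)\theta_2(0|8\tau)=8q\,\theta_3(2y|2\tau)\psi(q^8)$, and cancelling the common factor $\theta_3(2y|2\tau)$ yields (b). For (c) I would instead take $y=0,\ z=\pi\tau$: the full-period relation (\ref{pitau}) pulls out a factor $q^{-1}$ and turns the four squares into $\theta_{3,4}^2(0|\tau)$ and $\theta_{3,4}^2(\tfrac{\pi}{4}|\tau)$, whose ``$+$''-combinations evaluate (again by the companion formula, with centres $0$ and $\tfrac{\pi}{4}$) to $2\varphi^2(q)$ and $2\varphi(q)\varphi(-q)$; the left side becomes $2q^{-1}\varphi(q)[\varphi(q)+\varphi(-q)]$ while the right side is $4q^{-1}\varphi(q)\varphi(q^4)$, giving (c).

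The most delicate base identity is (a), for which I would use $y=0,\ z=\tfrac{\pi\tau}{2}$. Here (\ref{pitau/2}) gives $\theta_3(\tfrac{\pi\tau}{2}|\tau)=2\psi(q)$ and $\theta_4(\tfrac{\pi\tau}{2}|\tau)=0$, so the unshifted part contributes $4\psi^2(q)$, while the $\tfrac{\pi}{4}$-shifted part is a multiple of $\theta_2^2(\tfrac{\pi}{4}|\tau)-\theta_1^2(\tfrac{\pi}{4}|\tau)$. The crux is that this last combination vanishes: from (\ref{pi/2}) together with the oddness and $\pi$-antiperiodicity of $\theta_1$ one has $\theta_2(\tfrac{\pi}{4}|\tau)=\theta_1(\tfrac{3\pi}{4}|\tau)=\theta_1(\tfrac{\pi}{4}|\tau)$. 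Hence the left side reduces to $4\psi^2(q)$, the right side equals $4\varphi(q)\theta_2(2\pi\tau|8\tau)=4\varphi(q)\psi(q^2)$, and (a) follows. I expect this cancellation of the quarter-period term to be the main obstacle throughout, because the translation tables (\ref{pi/2})--(\ref{pitau/2}) only cover half-period shifts; dealing with the $\tfrac{\pi}{4}$-shifts that (\ref{ord4}) forces upon us (either via the duplication formula, as in (b) and (c), or via the symmetry $\theta_1(\tfrac{\pi}{4})=\theta_2(\tfrac{\pi}{4})$, as in (a)) is where essentially all the work lies.

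Finally I would derive the remaining three identities algebraically. Inserting (a) in the form $\psi^2(q)=\varphi(q)\psi(q^2)$ and then using (b) and (c) gives $\psi^2(q)\mp\varphi(-q)\psi(q^2)=[\varphi(q)\mp\varphi(-q)]\psi(q^2)$, which are precisely (e) and (f). Likewise (d) follows from the factorization $\varphi^2(q)-\varphi^2(-q)=[\varphi(q)-\varphi(-q)][\varphi(q)+\varphi(-q)]$ after substituting (b) and (c) and making one further use of (a). In this way all six identities are reduced to the three specializations of (\ref{ord4}) described above.
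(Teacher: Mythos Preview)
Your argument is correct, and for (a) you use exactly the same specialization $y=0,\ z=\tfrac{\pi\tau}{2}$ and the same cancellation $\theta_1(\tfrac{\pi}{4}|\tau)=\theta_2(\tfrac{\pi}{4}|\tau)$ as the paper. For (d) the paper likewise offers ``combining the above three identities'' as one of two routes, which is precisely your factorization followed by an application of (a) with $q\mapsto q^4$.

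Where you diverge is in (b), (c), (e), (f). The paper obtains each of these by a \emph{single} further specialization of (\ref{ord4}): it takes $y=\tfrac{\pi}{4}$ with $z=0$ for (b) and $z=\pi\tau$ for (c), and $y=\tfrac{\pi\tau}{2}$ with $z=0$ for (e) and $z=\pi\tau$ for (f), reducing everything via (\ref{thetap})--(\ref{ssf}). You instead keep $y$ free (or set $y=0$) and invoke the auxiliary ``plus'' formula
\[
\theta_3(u+v|\tau)\theta_3(u-v|\tau)+\theta_4(u+v|\tau)\theta_4(u-v|\tau)=2\theta_3(2u|2\tau)\theta_3(2v|2\tau)
\]
to collapse the quarter-period blocks, then derive (e) and (f) algebraically from (a), (b), (c). This is a genuinely different organization: the paper's route is more self-contained (six specializations of the single identity (\ref{ord4}), no external theta identity needed), while yours is more economical in the number of specializations (three) at the cost of importing the companion formula as an extra ingredient. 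Both strategies need some device to handle the $\tfrac{\pi}{4}$-shifts that (\ref{ord4}) introduces; the paper absorbs them by choosing $y=\tfrac{\pi}{4}$ or $y=\tfrac{\pi\tau}{2}$ so that the shifted arguments land on half-periods, whereas you neutralize them via the duplication formula.
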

\begin{proof}[\textbf{Proof}]
In the following proofs, (\ref{thetap})--(\ref{ssf}) will be used
often. We only prove (a) in detail.
\renewcommand{\labelenumi}{\rm (\alph{enumi})}
\begin{enumerate}
\item
Setting $y=0$ and $z={\pi\tau\over2}$ in (\ref{ord4}), we have
\begin{equation*}
 \theta_3^2({\pi\tau\over2}|\tau)
 -\theta_3^2({\pi\tau\over2}
 +{\pi\over4}|\tau)
 +\theta_4^2({\pi\tau\over2}|\tau)
 -\theta_4^2({\pi\tau\over2}+{\pi\over4}|\tau)
 =4\theta_3(0|2\tau)\theta_2(2\pi\tau|8\tau).
\end{equation*}
Applying (\ref{pitau/2}) to the above identity gives
\begin{equation*}
 \theta_2^2(0|\tau)
 -i\theta_2^2({\pi\over4}|\tau)
 +\theta_1^2(0|\tau)
 +i\theta_1^2({\pi\over4}|\tau)
 =4q^{1\over4}\theta_3(0|2\tau)\theta_2(2\pi\tau|8\tau).
\end{equation*}
Note that $\theta_1(0|\tau)=0$ by (\ref{thetap}) and
$\theta_2^2({\pi\over4}|\tau)=\theta_1^2({\pi\over4}|\tau)$ by
(\ref{pi/2}). Then the above identity reduces to
\begin{equation*}
\theta_2^2(0|\tau)
=4q^{1\over4}\theta_3(0|2\tau)\theta_2(2\pi\tau|8\tau).
\end{equation*}
Combining (\ref{thetap}) and (\ref{sf}) with the above identity
gives what we need.
\item
Take $y={\pi\over4}$ and $z=0$ in (\ref{ord4}).
\item
Take $y={\pi\over4}$ and $z=\pi\tau$ in (\ref{ord4}).
\item
This identity can be proved by either combining the above three
identities or taking $y=z={\pi\tau\over2}$ in (\ref{ord4}).
\item
Take $y={\pi\tau\over2}$ and $z=0$ in (\ref{ord4}).
\item
Take $y={\pi\tau\over2}$ and $z=\pi\tau$ in (\ref{ord4}).
\end{enumerate}
\end{proof}
Obviously, from (\ref{ord4}), more modular identities can be
deduced.
\section{The proofs of the two formulaes for  $(q;q)^{2n}_\infty$}
In this section, we prove the two formulaes for $(q;q)^{2n}_\infty$
from two special cases of $H_{m,n}(y_1,y_2,\cdots,y_n)$ in Theorem
\ref{fund},  respectively.

\begin{proof}[\textbf{Proof of Corollary \ref{q1}}]
 Note that
\begin{eqnarray*}   
\prod_{j=1}^{n}\theta_3\left(z+{(2j-1)\pi\over4n}|\tau\right)
\theta_3\left(z-{(2j-1)\pi\over4n}|\tau\right)
&=&\frac{(q;q)_{\infty}^{2n}}{(q^{2n};q^{2n})_{\infty}}
\theta_3(2nz|2n\tau).
\end{eqnarray*}
In (\ref{alter2}), we set $y_j={(2j-1)\pi\over4n}$ and
$y_{n+j}=-{(2j-1)\pi\over4n}$ for $1\leq j\leq n$. Then the left-hand side of (\ref{alter2}) equals
\begin{eqnarray*}
\sum_{k=0}^{2mn-1}\prod_{j=1}^{n}
\theta_3\left(z+{(2j-1)\pi\over4n}+{k\pi\over2mn}|\tau\right)
\theta_3\left(z-{(2j-1)\pi\over4n}+{k\pi\over2mn}|\tau\right)\\
=\frac{(q;q)_{\infty}^{2n}}{(q^{2n};q^{2n})_{\infty}}
\sum_{k=0}^{2mn-1}(-1)^k\theta_3\left(2nz+{k\pi\over
m}|2n\tau\right).
\end{eqnarray*}
Hence we have
\begin{eqnarray}
\lefteqn{\sum_{k=0}^{2mn-1}(-1)^k\theta_3\left(2nz+{k\pi\over
m}|2n\tau\right)}\hspace{0cm}    \notag\\
&=&\begin{array}{l}
\frac{(q^{2n};~q^{2n})_{\infty}}{(q;~q)_{\infty}^{2n}}
H_{m,2n}\left({\pi\over4n},{3\pi\over4n},\cdots,{(2n-1)\pi\over4n},
-{\pi\over4n},-{3\pi\over4n},\cdots,-{(2n-1)\pi\over4n}
|\tau\right)\theta_2 (2mnz|2m^2n\tau). \notag\\
\end{array}\\   \label{ilte22}
\end{eqnarray}
The left-hand side of the above identity equals
\begin{eqnarray*}
\sum\limits_{l=0}^{2n-1}\sum\limits_{k=lm}^{(l+1)m-1}(-1)^k
\theta_3\left(2nz+{k\pi\over m}|2n\tau\right) &=&
\sum\limits_{l=0}^{2n-1}\sum\limits_{k=0}^{m-1}(-1)^{k+lm}
\theta_3\left(2nz+l\pi+{k\pi\over m}|2n\tau\right)\\
&=&\sum\limits_{l=0}^{2n-1}\sum\limits_{k=0}^{m-1}(-1)^{k+lm}
\theta_3\left(2nz+{k\pi\over m}|2n\tau\right).
\end{eqnarray*}
Substitute the above identity back into (\ref{ilte22}) to obtain
\begin{eqnarray*}   \label{inalter2}
\lefteqn{\sum\limits_{l=0}^{2n-1}\sum\limits_{k=0}^{m-1}(-1)^{k+lm}
\theta_3\left(2nz+{k\pi\over m}|2n\tau\right)}\hspace{0cm}\\
&=&
\begin{array}{l}
\frac{(q^{2n};~q^{2n})_{\infty}}{(q;~q)_{\infty}^{2n}}
H_{m,2n}\left({\pi\over4n},{3\pi\over4n},\cdots,{(2n-1)\pi\over4n},
-{\pi\over4n},-{3\pi\over4n},\cdots,-{(2n-1)\pi\over4n}
|\tau\right)\theta_2 (2mnz|2m^2n\tau).
\end{array}
\end{eqnarray*}
Note that the left-hand side of the above identity equals $0$ when
$m$ is odd. When $m$ is even, replace $m$ by $2m$ in the above
identity to get
\begin{eqnarray*}   \label{inalter2}
\lefteqn{2n\sum\limits_{k=0}^{2m-1}(-1)^{k}
\theta_3\left(2nz+{k\pi\over 2m}|2n\tau\right)}\hspace{15cm}\\
=
\begin{array}{l}
\frac{(q^{2n};~q^{2n})_{\infty}}{(q;~q)_{\infty}^{2n}}
H_{2m,2n}\left({\pi\over4n},{3\pi\over4n},\cdots,{(2n-1)\pi\over4n},
-{\pi\over4n},-{3\pi\over4n},\cdots,-{(2n-1)\pi\over4n}
|\tau\right)\theta_2(4mnz|8m^2n\tau).
\end{array}
\end{eqnarray*}
By (\ref{boon2}), the left-hand side of the above identity equals
$4mn\theta_2(4mnz|8m^2n\tau)$. Substituting this back into the above
identity, and then cancelling $\theta_2(4mnz|8m^2n\tau)$ from both
sides of the resulting equation, we obtain
\begin{eqnarray*}   \label{hm2n}
H_{2m,2n}\left({\pi\over4n},{3\pi\over4n},\cdots,{(2n-1)\pi\over4n},
-{\pi\over4n},-{3\pi\over4n},\cdots,-{(2n-1)\pi\over4n}
|{\tau}\right)=
\frac{4mn(q;~q)_{\infty}^{2n}}{(q^{2n};~q^{2n})_{\infty}}.
\end{eqnarray*}
Then, combining with (\ref{coe}), we obtain
\begin{eqnarray*}
(q;q)_\infty^{2n}=q^{-m^2n}~ (q^{2n};q^{2n})_\infty
\sum_{\substack{s_1,s_2\cdots,s_{2n}=-\infty
\\s_1+s_2\cdots+s_{2n}=2mn}}^\infty
q^{{1\over2}(s_1^2+s_2^2+\cdots+s_{2n}^2)} e^{{\pi
i\over2n}\sum\limits_{l=1}^n{(s_l-s_{n+l})(2l-1)}}.
\end{eqnarray*}

Note the left-hand side of the above identity is independent of $m$.
Setting $m=1$, we get (\ref{qq}), which ends the proof.
\end{proof}

\begin{proof}[\textbf{Proof of Corollary \ref{q2}}]
In (\ref{alter2}), we put $y_j={(2j-1)\pi\tau\over4n}$ and
$y_{n+j}=-{(2j-1)\pi\tau\over4n}$ for $1\leq j\leq n$. Note the fact
that
\begin{equation*}  
\prod_{j=1}^{n}\theta_3\left(x+{(2j-1)\pi\tau\over4n}|\tau\right)
\theta_3\left(x-{(2j-1)\pi\tau\over4n}|\tau\right)
={(q;q)_{\infty}^{2n}\over(q^{1\over2n};q
^{1\over2n})_{\infty}}\theta_3\left(x|{\tau\over2n}\right).
\end{equation*}
%
By similar computation, we obtain
\begin{eqnarray*}
(q^{2n};q^{2n})_\infty^{2n}=q^{-{m^2n\over2}} (q;q)_\infty
\sum_{\substack{s_1,s_2\cdots,s_{2n}=-\infty
\\s_1+s_2\cdots+s_{2n}=mn}}^\infty
q^{n(s_1^2+s_2^2+\cdots+s_{2n}^2)
+{1\over4}\sum\limits_{l=1}^n{(s_l-s_{n+l})(2l-1)}}.
\end{eqnarray*}
Setting $m=1$, we get (\ref{qqq}). This achieves the proof.
\end{proof}


%
%
%
%
%

\end{document}